\documentclass[11pt]{article}

\usepackage[margin=1in]{geometry}
\usepackage{amsthm,amsmath,amssymb,latexsym}
\usepackage{graphicx,float}
\usepackage{tikz}
\usetikzlibrary{positioning,fit,backgrounds,calc,arrows.meta}
\usepackage{xcolor}
\usepackage[affil-it]{authblk}
\usepackage{indentfirst}
\usepackage[section]{placeins}
\usepackage{enumitem}
\usepackage{hyperref}

\hypersetup{
  colorlinks=true,
  linkcolor=blue,
  filecolor=blue,
  urlcolor=blue,
  citecolor=cyan
}

\newtheorem{theorem}{Theorem}
\newtheorem{lemma}{Lemma}
\newtheorem{claim}{Claim}

\newcommand{\N}{\mathbb{N}}

\newcommand{\dist}{\operatorname{dist}}
\newcommand{\e}{\mathrm{e}}
\newcommand{\dcol}[2]{d_{\mathrm{col},#1}(#2)}

\newcommand{\less}{\setminus}

\begin{document}

\title{New upper bound for the  Ramsey number of odd cycles}
\author{Ting HUANG, Jiabao YANG, Yaojun CHEN\footnote{Corresponding author. Email: yaojunc@nju.edu.cn}\\
 \small{School of Mathematics, Nanjing University, Nanjing 210093, P.R. CHINA}}
\date{}
\maketitle

\begin{quote}
\noindent {\bf Abstract:}
The \emph{$k$-color Ramsey number} $R_k(C_{2\ell+1})$ is the least integer $n$ such that any $k$-edge-coloring of a complete graph $K_n$ has a monochromatic odd cycle $C_{2\ell+1}$. 
Axenovich, Cames van Batenburg, Janzer, Michel, and Rundstr\"om~(JCT-B, 2026) recently proved
\[
R_k(C_{2\ell+1})\le (4\ell-2)^k k^{k/\ell}+1,
\]
and Miyazaki, Mulrenin, Pohoata, and Zheng further improved the factor $k^{k/\ell}$ to $(k!)^{1/\ell}$.
As Jenssen and Skokan (AM, 2021) determined $R_k(C_{2\ell+1})$ for fixed $k$ and sufficiently large $\ell$, it becomes even more interesting to seek better bound for fixed $\ell$ and sufficiently large $k$. In this paper, we show
\[
R_k(C_{2\ell+1})
\le
\frac{2\ell}{2\ell-1}(2\ell-1)^k(k!)^{1/\ell}
\exp\!\left(k^{1-1/\ell}+O_\ell\!\left(k^{1-2/\ell}+\log k\right)\right)+1
\]
for every fixed $\ell\ge 2$ and sufficiently large $k$, which improves the bound of Miyazaki et al. by a factor $2^{k-o(k)}$, and the bound of Axenovich et al. by a factor $(2\e^{1/\ell})^{k-o(k)}$.

\medskip
\noindent {\bf Keywords:} Ramsey theory; Odd cycles; Edge colorings.
 
\medskip
\noindent {\bf 2020 MSC:} 05C55, 05C38, 05C15
\end{quote}

\section{Introduction}

The \emph{$k$-color Ramsey number} $R_k(H)$ of a graph $H$ is the smallest integer $n$ such that every $k$-edge-coloring of the complete graph $K_n$ contains a monochromatic copy of $H$. Let $C_n$ denote the cycle on $n$ vertices. Even in the case $H=C_3$, determining the asymptotic growth of $R_k(C_3)$ remains a major open problem. This is precisely the Schur--Erdős problem, which asks for the rate of growth of $R_k(C_3)$ as the number of colors $k$ tends to infinity. In 1917, Schur~\cite{S16} established the bounds
$$
\Omega(3^k) \le R_k(C_3) \le O(k!).
$$
The factorial upper bound, apart from refinements to the absolute constant, has remained asymptotically unchanged, while the lower bound has been improved to $\Omega(3.28^k)$ by Ageron, Casteras, Pellerin, Portella, Rimmel, and Tomasik~\cite{ACPPRT21}. Erd\H{o}s conjectured that the true growth rate is exponential in $k$, specifically $R_k(C_3)=2^{\Theta(k)}$~\cite{CG98}, and offered monetary prizes for this and several closely related problems.

For longer odd cycles, Bondy and Erd\H{o}s~\cite{BE73} and Erd\H{o}s and Graham~\cite{EG73} established the classical bounds 
$$\ell\cdot 2^k+1\le R_k(C_{2\ell+1})\le 2\ell\cdot (k+2)!.$$
The behavior depends strongly on which parameter is fixed.
  If $k$ is fixed and $\ell$ is sufficiently large, Jenssen and Skokan~\cite{JS21} proved the exact formula
$R_k(C_{2\ell+1})=\ell \cdot 2^k+1$, which implies the lower bound is sharp.
However, this fails for fixed $\ell$ and sufficiently large $k$. Day and Johnson~\cite{DJ17} proved that there exists a constant $\delta=\delta(\ell)>0$ such that
$
R_k(C_{2\ell+1}) \ge 2\ell \cdot (2+\delta)^{k-1}$.

The first substantial upper bounds in the latter regime were obtained by Li~\cite{Li09} and by Lin and Chen~\cite{LC19}. 
Li~\cite{Li09} proved that there is a constant $c$ such that $R_k(C_5)\le c^k\sqrt{k!}$.
Lin and Chen~\cite{LC19} later extended this result to all longer odd cycles, showing that for every $\ell\ge2$, $R_k(C_{2\ell+1})\le c^k\sqrt{k!}$, where the constant $c$ depends on $\ell$.
Li~\cite{Li09} further showed that, under the wide open additional assumption that every Ramsey graph for $R_k(C_{2\ell+1})$ is nearly regular (every Ramsey graph for $R_k(C_{2\ell+1})$ has minimum degree at least an $\varepsilon$-fraction of its average degree), one could improve the estimate to $R_k(C_{2\ell+1})\le c^k (k!)^{1/\ell}$. Moreover, Li~\cite{Li09} proposed a conjecture, asserting that $R_k(C_{2\ell+1})\le o((k!)^{1/\ell})$ as $k\to\infty$. 
This conjecture is motivated by Fox's conjecture~\cite{ACBJMR2026}, which states that for every $\varepsilon>0$, there exists an $\ell$ such that $R_k(C_{2\ell+1}) \le k^{\varepsilon k}$ for all sufficiently large $k$.

 Axenovich, Cames van Batenburg, Janzer, Michel, and Rundstr\"om~\cite{ACBJMR2026} settled Fox's conjecture and obtained the first improvement in the exponent, beyond an absolute constant factor, since the 1973 work of Bondy and Erd\H{o}s.

\begin{theorem}[Axenovich et al.~\cite{ACBJMR2026}]\label{thm:ACBJMR2026}
For all $k,\ell\in\N$,
\(
R_k(C_{2\ell+1})\le (4\ell-2)^k k^{k/\ell}+1.
\)
\end{theorem}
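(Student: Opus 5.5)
We argue by induction on $k$ and show that every $k$-edge-colored $K_n$ with no monochromatic $C_{2\ell+1}$ has $n\le f(k):=(4\ell-2)^k k^{k/\ell}$. The base case $k=1$ is immediate. For $k=1$ the bound $4\ell-2\ge 2\ell+1$ holds whenever $\ell\ge 2$. For $\ell=1$ the bound reads $R_1(C_3)\le 2\cdot 1+1=3$, which is correct.

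The key structural input is a lemma about graphs with no $C_{2\ell+1}$. Let $G$ be such a graph. Either $G$ has an $\ell$-th neighbourhood structure that forces a large set inside which all edges avoid the colour, or a more robust counting statement holds. Concretely, we use the following standard fact. If $G$ is $C_{2\ell+1}$-free and $v$ is a vertex, let $N_i(v)$ be the set of vertices at distance exactly $i$ from $v$. Then for the smallest $i\le \ell$ at which the ratio $|N_{i}(v)|/|N_{i-1}(v)|$ falls below a threshold $D$, the set $N_i(v)$ (together with part of the BFS tree) spans no edge of $G$, apart from a controlled number of exceptional edges. Up to a factor $(2\ell-1)$ accounting for the layers $i-1,i,i+1$, one finds an independent-type set in $G$. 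More precisely, in a $C_{2\ell+1}$-free graph the layers $N_i(v)$ for $i<\ell$ induce subgraphs whose chromatic or independence structure lets us remove the colour.

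The induction step runs as follows. Fix the colour $c$ whose graph $G_c$ has the most edges, so its average degree is at least $(n-1)/k$. Pass to a subgraph $H\subseteq G_c$ of minimum degree $d\ge (n-1)/(2k)$. Pick $v$ and grow the BFS layers inside $H$. Suppose $|N_{i+1}|\ge D|N_i|$ for every $i<\ell$. Then $|N_\ell|\ge D^{\ell}$, and since $|N_\ell|\le n$ we get $D\le n^{1/\ell}$. Otherwise some layer $N_i$ has a small expansion ratio. Using $C_{2\ell+1}$-freeness (Bondy--Simonovits-type arguments on consecutive layers), we then find a set $S$ of size at least roughly $|N_i|\cdot d/((2\ell-1)D)$ in which colour $c$ is sparse. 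After a bipartite-type cleaning step, $S$ contains a set of size $\ge n/((4\ell-2)k^{1/\ell})$ that has no edge of colour $c$. Applying induction to that set with $k-1$ colours gives $n/((4\ell-2)k^{1/\ell})\le f(k-1)$. Hence
\[
n\le (4\ell-2)k^{1/\ell} f(k-1)\le f(k).
\]
For the last inequality we use $k^{1/\ell}(k-1)^{(k-1)/\ell}\le k^{k/\ell}$.

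The main obstacle is the balancing of $D$ between the two cases. The choice $D\approx (n/d)^{1/\ell}\cdot$const, with $d\approx n/k$, gives $D\approx k^{1/\ell}$. This is exactly what yields the factor $k^{1/\ell}$ per colour. The work lies in proving that a layer with expansion less than $D$ produces a colour-$c$-free set of size $d/((4\ell-2)D)$. I would argue this by taking the layer $N_{i+1}$ adjacent to a non-expanding $N_i$. Every vertex of $N_i$ has degree at least $d$, but there are few vertices in $N_{i+1}$. So some layer set has size at least $d/D$ per vertex, and odd-cycle-freeness within distance $\ell$ forces $N_{j}(v)$ to be independent in $G_c$ for each $j\le \ell-1$. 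The reason is that an edge inside $N_j$ closes an odd closed walk of length at most $2j+1$. This walk contains an odd cycle, but it must be lengthened to length exactly $2\ell+1$, which is the delicate point. I would handle it by using the minimum degree to extend paths, and accept losing the factor $4\ell-2$ there.
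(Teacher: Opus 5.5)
\textbf{There is a genuine gap at the decisive step.} You assert that a ``bipartite-type cleaning step'' yields a set of at least $n/((4\ell-2)k^{1/\ell})$ vertices spanning no edge of colour $c$. Nothing in your setup supports this, and what the setup actually gives is much smaller. Let $i$ be the first index with $|N_{i+1}|<D|N_i|$. A vertex $x\in N_i$ has all of its at least $d$ neighbours in $N_{i-1}\cup N_i\cup N_{i+1}$ (Lemma~\ref{lem:adjacent-layers}), so $d\le(D^{-1}+1+D)|N_i|$. Theorem~\ref{thm:layer-color} then guarantees a colour-$c$-independent set of only about $d/((2\ell-1)D)$ vertices, i.e.\ of order $n/k^{1+1/\ell}$ for fixed $\ell$. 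That is short by a factor of order $k$; it is even worse than a colour class of $N_1$. No cleaning inside $N_i$ can return more than $|N_i|$ vertices.

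The obstruction is the framework itself. A plain induction on $k$ needs, at every step, a set of proportion $k^{-1/\ell}$ avoiding one colour \emph{entirely}. A BFS from a single vertex only guarantees sets of size comparable to one colour degree, about $n/k$.

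Two supporting claims are also false.
\begin{itemize}
\item $C_{2\ell+1}$-freeness does not make $N_j(v)$ independent for $j\le\ell-1$, because shorter odd cycles are allowed. For $\ell\ge2$, $K_4$ is $C_{2\ell+1}$-free, yet the first layer of any of its vertices is a triangle. Large minimum degree does not help either: join each vertex of a triangle to one side of its own copy of $K_{\delta,\delta}$, and the only odd cycle is the triangle. The correct input is Theorem~\ref{thm:layer-color}, which gives $\chi\le2\ell-1$ for a single exact layer. That is where $4\ell-2=2(2\ell-1)$ comes from, once the odd and even layers of a ball receive separate palettes.
\item The count $|N_\ell|\ge D^\ell$, taken from $|N_0|=1$, only gives $D\le n^{1/\ell}$. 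To get $k^{1/\ell}$, the growth must be anchored at the first layer and run up to layer $\ell+1$, which is deleted rather than coloured.
\end{itemize}

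\textbf{The missing idea} is the colour-degree weighting with thin-interface deletion. The paper attributes this to Axenovich et al.\ (it cites this theorem without reproving it) and refines it to single exact layers in the proof of Theorem~\ref{thm:weight-at-most-one}. One never looks for a large set free of colour $c$. Instead one deletes a thin interface around a retained colour-$c$-independent core, so the core is guaranteed to lose $c$ while everything outside the interface survives.

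Concretely, let $M=(4\ell-2)k^{1/\ell}$, $w(x)=M^{-\dcol{G}{x}}$ and $W(G)=\sum_{x}w(x)$. Fix $v$, choose $c$ maximizing $w(N_c^1(v))$, and set $B_r=\bigcup_{j\le r}N_c^j(v)$. Then $W(G)\le w(v)+k\,w(N_c^1(v))\le k\,w(B_1)$. Hence $w(B_{\ell+1})\le k\,w(B_1)$, and some $r\in[\ell]$ has $w(B_{r+1})\le k^{1/\ell}w(B_r)$.

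Colour $G_c[B_r]$ properly with $4\ell-2$ colours, keep a heaviest class $I$, and delete $B_{r+1}\less I$. All colour-$c$ neighbours of $I$ lie in $B_{r+1}$, so every vertex of $I$ loses $c$ and its weight grows by a factor of at least $M$. Hence
\[
W(G')\ge W(G)-w(B_{r+1})+M\,w(I)\ge W(G)-k^{1/\ell}w(B_r)+k^{1/\ell}w(B_r)=W(G).
\]
The induction is on $|V(G)|$ rather than on $k$, because different vertices lose different colours and in general no colour disappears from the whole graph. It gives $nM^{-k}\le W(G)\le1$, which is the statement.
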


Miyazaki, Mulrenin, Pohoata, and Zheng~\cite{Miyazaki2026} then conditioned the weight increment on the current color degree, thereby obtained the following factorial refinement, which improves the bound in Theorem~\ref{thm:ACBJMR2026} by a multiplicative factor of roughly $\e^{k/\ell}$.

\begin{theorem}[Miyazaki et al.~\cite{Miyazaki2026}]\label{thm:Miyazaki2026}
For all $k,\ell\in\N$,
\(
R_k(C_{2\ell+1})\le (4\ell-2)^k (k!)^{1/\ell}+1.
\)
\end{theorem}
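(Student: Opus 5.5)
We argue by induction on the number of colors, in a form that exposes the factorial. Let $f(j)$ be the largest $N$ such that some $j$-edge-coloring of $K_N$ has no monochromatic $C_{2\ell+1}$, so that $R_j(C_{2\ell+1})=f(j)+1$. We aim for the recursion
\[
f(j)\le (4\ell-2)\, j^{1/\ell} f(j-1), \qquad f(0)=1 .
\]
Iterating it gives $f(k)\le (4\ell-2)^k\prod_{j\le k} j^{1/\ell}=(4\ell-2)^k (k!)^{1/\ell}$, which is the claimed bound. The point of conditioning on the current color degree is this. In a $j$-coloring of $K_N$, any vertex $v$ has a color $c$ whose degree at $v$ is $d\ge (N-1)/j$. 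Exploiting $d$ through $\ell$ BFS layers costs only a factor $j^{1/\ell}$ at this step, instead of a uniform $k^{1/\ell}$ at every step as in Theorem~\ref{thm:ACBJMR2026}.

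For the inductive step, fix a $j$-coloring of $K_N$ with no monochromatic $C_{2\ell+1}$, a vertex $v$, and a majority color $c$ at $v$. Let $G$ be the color-$c$ graph and let $L_0=\{v\},L_1,\dots,L_\ell$ be its BFS layers from $v$, so $|L_1|=d\ge (N-1)/j$. The goal is a set $S$ of size at least $N/((4\ell-2)j^{1/\ell})$ spanning no color-$c$ edge. Such an $S$ is a $(j-1)$-colored clique with no monochromatic $C_{2\ell+1}$, so $|S|\le f(j-1)$, which gives the recursion.

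We obtain $S$ in two stages.
\begin{enumerate}
\item \emph{Structural lemma.} In a $C_{2\ell+1}$-free graph, for each $1\le i\le \ell$, the layer $L_i$ contains a set of at least $|L_i|/(4\ell-2)$ vertices that spans no edge. The reason is that an edge inside $L_i$, or a short path inside $L_i$, closes with two BFS paths to $v$ into an odd closed walk of length at most $2\ell+1$. A cleaning argument then partitions $L_i$ into at most $4\ell-2$ independent pieces: split by residue of distance along the BFS tree, and separately handle vertices whose tree paths merge late.
\item \emph{Growth lemma.} Since $\sum_i|L_i|\le N$, compare $|L_1|\ge N/j$ with the layer sizes. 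Either some layer $L_i$ with $i\le\ell$ has size at least $N/j^{1/\ell}$, or the ratios $|L_{i+1}|/|L_i|$ are small. In the second case the expansion forced by $C_{2\ell+1}$-freeness (a Bondy--Simonovits type growth estimate on the layered graph) yields a contradiction. Indeed, $\prod_{i<\ell}|L_{i+1}|/|L_i|$ would then be too small relative to $d$.
\end{enumerate}
Combining the two stages gives $|S|\ge N/((4\ell-2)j^{1/\ell})$.

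The main obstacle is the structural lemma, with the exact constant $4\ell-2$. An edge inside a layer only produces an odd closed \emph{walk} of length at most $2\ell+1$, not a $C_{2\ell+1}$. Such a walk may instead contain a shorter odd cycle, which is allowed. I would first try to handle this by choosing, among the BFS layers, the one minimizing the number of internal edges relative to its size. Then, inside that layer, I would reroute tree paths to lengthen a short odd cycle to exactly $2\ell+1$, using that $\ell$ consecutive layers are available. I would take the degree-conditioned weighting of Miyazaki et al.\ only as the bookkeeping device that turns $j$ into $j^{1/\ell}$ in the growth lemma.
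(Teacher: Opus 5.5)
\paragraph{Review.} There is a genuine gap, and it lies in your Stage~2 (the growth lemma), not in Stage~1. Your growth lemma says that if no layer $L_i$ with $i\le\ell$ has size at least $N/j^{1/\ell}$, then $C_{2\ell+1}$-freeness of $G_c$ yields a contradiction. But $C_{2\ell+1}$-freeness imposes no lower bound on layer growth. Suppose the color-$c$ component of $v$ is a star. Then $|L_1|=d\approx N/j$ and $L_i=\varnothing$ for $i\ge 2$. No layer reaches $N/j^{1/\ell}$, and nothing is contradicted.

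Expansion arguments of Bondy--Simonovits type need a minimum-degree condition on every vertex of the layers. You control only the color-$c$ degree of the single vertex $v$. In such non-growing configurations the layers supply a $c$-independent set of size only $O(N/j)$. That reproduces a factorial-type recursion $f(j)\le O(j)\,f(j-1)$, not one with $j^{1/\ell}$. Forcing growth would need the near-regularity assumption on Ramsey colorings mentioned in the introduction, under which Li obtained $c^k(k!)^{1/\ell}$; that assumption is open. Stage~1, by contrast, is not the obstacle. It is the Erd\H{o}s--Faudree--Rousseau--Schelp theorem (Theorem~\ref{thm:layer-color}), which gives $2\ell-1$ colors per exact layer, so your proposed rerouting argument is unnecessary.

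\paragraph{Why the global recursion fails, and what replaces it.} Your $S$ is the only thing you keep, so $|S|$ has to be compared with all of $N$. The paper only cites this theorem rather than proving it. Its own proof of Theorem~\ref{thm:weight-at-most-one} (which follows Axenovich et al.\ and Miyazaki et al.)\ compares only locally. In one case you keep a $c$-independent set $I_i\subseteq L_i$ and delete just the interface $L_{i-1}\cup(L_i\setminus I_i)\cup L_{i+1}$, retaining the rest of the clique. In the other case, when the first layer is cheap (your star case), you delete $L_1$ so that $v$ alone loses color $c$.

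After either step the surviving clique still uses color $c$ elsewhere, so induction on the number of colors is impossible. This is why one passes to $k$-local colorings and to weights $w_G(x)=1/P_{\dcol{G}{x}}$, which reward a vertex for each color it loses. One then inducts on $|V(G)|$ to show $W(G)\le 1$. The dichotomy of Claim~\ref{claim:layer-choice} is the rigorous replacement for your growth lemma: either the first layer is cheap, or some layer satisfies $\alpha_{i-1}+\alpha_i+\alpha_{i+1}\le B_d\alpha_i$. It needs no expansion, because in the non-growing case the deletion is paid for by the weight gain at $v$ rather than by a large independent set.
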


Although Jenssen and Skokan~\cite{JS21} determined the exact value of
$R_k(C_{2\ell+1})$ for fixed $k$ and sufficiently $\ell$, it is far more away from being known that the value of $R_k(C_{2\ell+1})$ for fixed $\ell$ and sufficiently large $k$. This motivates us to improve the upper bounds established in Theorems \ref{thm:ACBJMR2026} and \ref{thm:Miyazaki2026} along the direction.

\vskip 2mm
Before stating our main results, we need some additional notation and concepts.

Fix $\ell\ge1$.  For every integer $d\ge2$, define
\begin{equation}\label{eq:rho-intro}
\rho_d=\min\left\{\rho\in[1,\infty):1+\rho+\rho^2+\cdots+\rho^{\ell}\ge d\right\}.
\end{equation}
An edge-coloring is called \emph{$k$-local} if at most $k$ distinct colors are incident with each vertex; the total number of colors used in the graph may be larger than $k$.
Let $G$ be an edge-colored graph.  
A \emph{monochromatic graph} of $G$ means the spanning subgraph consisting of all edges of one fixed color.  
More precisely, for a color $c$, the \emph{color-$c$ graph} $G_c$ is the spanning subgraph of $G$ whose edge set consists exactly of the edges colored with $c$.

Our first main result is a local-coloring theorem.

\begin{theorem}\label{thm:main}
Let $k,\ell\in\N$.  If $K_n$ has a $k$-local edge-coloring in which every monochromatic graph is $C_{2\ell+1}$-free, then
\(
n\le
2\ell(2\ell-1)^{k-1}
\prod_{d=2}^{k}\bigl(1+\rho_d+\rho_d^{-1}\bigr).
\)

\end{theorem}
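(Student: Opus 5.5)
We argue by induction on $k$, proving the stronger statement for $k$-local colorings of arbitrary host graphs in a weighted form. This follows the strategy of Axenovich et al.\ and Miyazaki et al.\ (Theorems~\ref{thm:ACBJMR2026} and~\ref{thm:Miyazaki2026}). Set $f(1)=2\ell$ and $f(k)=(2\ell-1)(1+\rho_k+\rho_k^{-1})f(k-1)$, so that $f(k)$ equals the claimed bound. For $k=1$ the graph $K_n$ is monochromatic and $C_{2\ell+1}$-free, hence $n\le 2\ell$.

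For the inductive step, fix a vertex $v$ and a color $c$ at $v$, and grow breadth-first layers $N_0=\{v\},N_1,\dots,N_\ell$ in the color-$c$ graph $G_c$. The key structural fact is the standard one for $C_{2\ell+1}$-free graphs: within a suitable range of $i$, the color $c$ cannot appear inside a layer $N_i$ without creating an odd cycle of length exactly $2\ell+1$. More precisely, we use the lemma that a $C_{2\ell+1}$-free graph has no edge inside $N_i$ for $i<\ell$ once we pass to a subgraph of large minimum degree, or else we pay a factor $(2\ell-1)$. Consequently the vertices of a suitable layer span a clique colored with at most $k-1$ colors locally, since $c$ is missing on it. Induction then bounds each such layer by $f(k-1)$.

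The remaining task is to show that some layer has size at least $n/\bigl((2\ell-1)(1+\rho_k+\rho_k^{-1})\bigr)$. Here the parameter $\rho_k$ enters. Choose $c$ to be a color of maximum degree at $v$, so the color-$c$ degree at $v$ is at least $(n-1)/k$. Consider the expansion ratios $|N_{i+1}|/|N_i|$. If every ratio were less than $\rho_k$, the total $1+\rho+\dots+\rho^\ell$ would be below $k$; this forces the sizes to be compared against $n$ via the defining inequality of $\rho_d$. Otherwise there is a first index where the ratio drops below $\rho_k$, and the union of three consecutive layers $N_{i-1}\cup N_i\cup N_{i+1}$ is at most $(\rho_k^{-1}+1+\rho_k)|N_i|$. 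We then apply induction to an appropriate color-$c$-free clique inside $N_i$, combined with the $(2\ell-1)$ factor coming from the odd-cycle bipartite structure (Bondy--Simonovits-type path counting between layers). Iterating this with a weight/potential argument across vertices, in the style of Miyazaki et al., lets the index $d$ range over the current color degree. This produces the product $\prod_{d=2}^k$ rather than a single factor.

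The main obstacle is the third step: making the $1+\rho+\rho^{-1}$ accounting rigorous. Layers need not be color-$c$-independent in general, so we must either restrict to a sub-host graph where the BFS is clean, or carry a weight function $w$ on vertices, normalized with $\sum w\ge n/f(k)$, that decreases multiplicatively as colors are removed. My first attempt would be to define the potential $\Phi=\sum_v \prod_{d\le \deg_{\mathrm{col}}(v)} g(d)$ and show that it is monotone under deleting one color class along the chosen layer.
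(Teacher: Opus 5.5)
Your proposal has a genuine gap, and it sits at the step you flag as the main obstacle: the induction on $k$ you set up cannot produce the bound. The ratio argument only yields an index $i$ with $|N_{i-1}\cup N_i\cup N_{i+1}|\le(1+\rho_k+\rho_k^{-1})|N_i|$. This controls the interface around $N_i$ relative to $N_i$ itself. It says nothing about the remaining vertices, which may still see $k$ colors, so the inductive hypothesis for $k-1$ colors gives no information about them.

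In particular, your target ``some layer has size at least $n/((2\ell-1)B_k)$'' does not follow, and it can fail for the chosen $v,c$. Nothing prevents the color-$c$ component of $v$ from being $\{v\}\cup N_1$ with $|N_1|\approx(n-1)/k$. Then every layer has size $O(n/k)$, far below $n/((2\ell-1)B_k)\approx n/((2\ell-1)k^{1/\ell})$ once $k$ is large. Recursing only on $|N_1|\ge(n-1)/k$ gives back a bound of order $(2\ell-1)^k k!$.

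Several local claims are also wrong. For $\ell\ge2$ a layer can contain color-$c$ edges: a color-$c$ triangle through $v$ puts an edge inside $N_1$. The correct input is the Erd\H{o}s--Faudree--Rousseau--Schelp bound $\chi(G_c[N_i])\le 2\ell-1$ for $1\le i\le\ell$. So only a color-$c$ independent class $I\subseteq N_i$ misses color $c$, not the whole layer. The contradiction in the ratio argument comes from all ratios \emph{exceeding} $\rho$, not from all being below it. Finally, a workable weight must \emph{increase}, not decrease, when a vertex loses a color.

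Your suggested $\Phi$ is the right object if $g(d)=1/A_d$. What is missing is the induction it should drive: induct on $|V(G)|$, not on $k$, and prove $W(G)=\sum_x 1/P_{d_{\mathrm{col}}(x)}\le 1$. This gives $n\le P_k$, since every weight is at least $1/P_k$. The steps are:
\begin{enumerate}
\item Take $v$ of \emph{minimum} color degree $d$. Then every vertex that later loses a color has color degree at least $d$, and its weight grows by a factor at least $A_d$.
\item Take $c$ maximizing the weight $\alpha_1=w(N_1)$. This gives the budget $\sum_{i\ge1}\alpha_i\le d\alpha_1$, where $\alpha_i=w(N_i)$.
\item Either $\alpha_1\le(A_d-1)\alpha_0$, and deleting $N_1$ makes $v$ lose $c$. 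This is the case $i=0$, which your first-index argument does not treat.
\item Or the ratio argument gives $i\in[\ell]$ with $\alpha_{i-1}+\alpha_i+\alpha_{i+1}\le B_d\alpha_i$. Keep the heaviest color-$c$ independent class $I_i\subseteq N_i$, delete $N_{i-1}\cup(N_i\setminus I_i)\cup N_{i+1}$, and retain everything else.
\end{enumerate}
In the second case, color-$c$ edges at $N_i$ only go to adjacent layers, so every vertex of $I_i$ loses $c$. The gain $A_d\,w(I_i)\ge B_d\alpha_i$ pays for the deleted interface, so $W$ does not decrease while the vertex set shrinks. The case $d=1$ is handled separately: $G_c$ is $2\ell$-colorable, giving the factor $A_1=2\ell$. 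This vertex-deletion induction pays only for the interface and lets the index $d$ track the current minimum color degree. That is what produces $\prod_{d=2}^k(1+\rho_d+\rho_d^{-1})$.
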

The empty product is interpreted as $1$.
Since every $k$-edge-coloring is also $k$-local, Theorem~\ref{thm:main} immediately yields the desired Ramsey bound.

\begin{theorem}\label{thm:ramsey}
For all $k,\ell\in\N$,
\[
R_k(C_{2\ell+1})
\le
\left\lfloor
2\ell(2\ell-1)^{k-1}
\prod_{d=2}^{k}\bigl(1+\rho_d+\rho_d^{-1}\bigr)
\right\rfloor+1.
\]
\end{theorem}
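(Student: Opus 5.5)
This statement is an immediate consequence of Theorem~\ref{thm:main}. Set
\[
N:=\left\lfloor 2\ell(2\ell-1)^{k-1}\prod_{d=2}^{k}\bigl(1+\rho_d+\rho_d^{-1}\bigr)\right\rfloor+1 .
\]
By definition of $R_k(C_{2\ell+1})$ as the least integer with the Ramsey property, it suffices to show that every $k$-edge-coloring of $K_N$ contains a monochromatic $C_{2\ell+1}$.

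We argue by contradiction. Fix a $k$-edge-coloring of $K_N$ and suppose every color class $G_c$ is $C_{2\ell+1}$-free. The coloring uses at most $k$ colors in total, so at most $k$ distinct colors are incident with any vertex. Hence the coloring is $k$-local in the sense defined before Theorem~\ref{thm:main}. All hypotheses of Theorem~\ref{thm:main} hold, so
\[
N\le 2\ell(2\ell-1)^{k-1}\prod_{d=2}^{k}\bigl(1+\rho_d+\rho_d^{-1}\bigr).
\]
Since $N$ is an integer, this forces $N\le\lfloor\cdot\rfloor$, contradicting $N=\lfloor\cdot\rfloor+1$.

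We should also check that the quantities are well defined. For each $d\ge 2$, the function $\rho\mapsto 1+\rho+\cdots+\rho^\ell$ is continuous and unbounded on $[1,\infty)$. So the minimum in \eqref{eq:rho-intro} exists and $\rho_d\ge1$. When $k=1$ the product is empty and equals $1$, giving the bound $2\ell+1$. This is consistent, since a single color on $K_{2\ell+1}$ contains $C_{2\ell+1}$.

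There is no real obstacle here. All the substance lies in Theorem~\ref{thm:main}, which we are allowed to assume; the only points needing care are the integrality/floor step and the observation that a global $k$-coloring is $k$-local.
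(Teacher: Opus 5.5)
Your proposal is correct and follows essentially the same argument as the paper: a $k$-edge-coloring is automatically $k$-local, so Theorem~\ref{thm:main} bounds the order of any coloring with no monochromatic $C_{2\ell+1}$. Integrality of $n$ then passes the bound to the floor. Your extra checks on the well-definedness of $\rho_d$ and on the case $k=1$ are accurate but not needed.
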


For fixed $\ell\ge2$ and $k$ sufficiently large, the product has a simple asymptotic form.

\begin{theorem}\label{thm:asymptotic-intro}
For every fixed integer $\ell\ge2$ and $k$ sufficiently large,
\[
R_k(C_{2\ell+1})
\le
\frac{2\ell}{2\ell-1}(2\ell-1)^k(k!)^{1/\ell}
\exp\!\left(k^{1-1/\ell}+O_\ell\!\left(k^{1-2/\ell}+\log k\right)\right)+1.
\]
\end{theorem}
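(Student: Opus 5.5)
The plan is to deduce the statement from Theorem~\ref{thm:ramsey} by an asymptotic evaluation of the product. First, $2\ell(2\ell-1)^{k-1}=\frac{2\ell}{2\ell-1}(2\ell-1)^k$, and dropping the floor only increases the bound. So it suffices to show
\[
\sum_{d=2}^{k}\log\bigl(1+\rho_d+\rho_d^{-1}\bigr)=\frac1\ell\log k!+k^{1-1/\ell}+O_\ell\bigl(k^{1-2/\ell}+\log k\bigr).
\]

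The key step is a uniform expansion of $\rho_d$. For $d\le \ell+1$ we have $\rho_d=1$; there are only $O_\ell(1)$ such terms, each bounded, so they go into the error. For $d>\ell+1$, $\rho_d>1$ is the unique root of $1+\rho+\cdots+\rho^\ell=d$. Write this equation as $\rho^\ell\bigl(1+\rho^{-1}+O(\rho^{-2})\bigr)=d$ and take $\ell$-th roots. This gives
\[
\rho_d=d^{1/\ell}\bigl(1-\tfrac1\ell d^{-1/\ell}+O_\ell(d^{-2/\ell})\bigr).
\]
In particular $\rho_d\asymp d^{1/\ell}$ and $\rho_d^{-1}=d^{-1/\ell}+O_\ell(d^{-2/\ell})$. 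Establishing this carefully, uniformly in $d$ with explicit $O_\ell$ constants, is the only real obstacle. I would first bootstrap a crude bound $\tfrac12 d^{1/\ell}\le\rho_d\le d^{1/\ell}$ directly from the defining equation, and then substitute it back to obtain the second-order term.

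Next expand each summand, using $1+\rho+\rho^{-1}=\rho(1+\rho^{-1}+\rho^{-2})$:
\[
\log\bigl(1+\rho_d+\rho_d^{-1}\bigr)=\log\rho_d+\rho_d^{-1}+O(\rho_d^{-2})=\frac1\ell\log d-\frac1\ell d^{-1/\ell}+d^{-1/\ell}+O_\ell(d^{-2/\ell}).
\]
Summing over $d\le k$ gives three contributions:
\begin{itemize}[leftmargin=*]
\item the terms $\frac1\ell\log d$ sum to exactly $\frac1\ell\log k!$;
\item by comparison with an integral, $\sum_{d\le k}d^{-1/\ell}=\frac{\ell}{\ell-1}k^{1-1/\ell}+O(1)$, so $(1-\frac1\ell)\sum d^{-1/\ell}=k^{1-1/\ell}+O(1)$;
\item $\sum_{d\le k} d^{-2/\ell}$ is $O_\ell(k^{1-2/\ell})$ for $\ell\ge3$ and $O(\log k)$ for $\ell=2$.
\end{itemize}
Both cases of the last item are covered by $O_\ell(k^{1-2/\ell}+\log k)$. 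Exponentiating and inserting the result into Theorem~\ref{thm:ramsey} yields the stated bound.
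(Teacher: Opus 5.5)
Your proposal is correct and follows essentially the same route as the paper: expand $\rho_d=d^{1/\ell}-\frac1\ell+O_\ell(d^{-1/\ell})$, deduce $\log B_d=\frac1\ell\log d+\frac{\ell-1}{\ell}d^{-1/\ell}+O_\ell(d^{-2/\ell})$, and sum using integral comparison. The differences are cosmetic. The paper gets the expansion of $\rho_d$, carried one order further than needed, by choosing coefficients of a trial root and applying the mean value theorem. Your direct $\ell$-th root extraction, together with the crude bound $\frac12 d^{1/\ell}\le\rho_d\le d^{1/\ell}$, is enough.
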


Theorem~\ref{thm:asymptotic-intro} shows that our bound is smaller than the bound in Theorem~\ref{thm:Miyazaki2026} by a factor $2^{k-o(k)}$.  Compared with Theorem~\ref{thm:ACBJMR2026}, it gains both this factor and the factorial improvement from $k^{k/\ell}$ to $(k!)^{1/\ell}$; altogether the improvement factor is $(2\e^{1/\ell})^{k-o(k)}$.

The proof has two new ingredients.
The first is an \emph{exact-layer deletion}.  The argument of Axenovich et al. controls an entire monochromatic ball of radius $\ell$.  Because the even and odd BFS layers require separate palettes, this costs $4\ell-2$ vertex colors.  We keep vertices from only one exact layer.  A theorem of Erd\H{o}s, Faudree, Rousseau, and Schelp then costs only $2\ell-1$ colors, and the two adjacent layers form a small interface containing every color-$c$ edge incident with the retained layer.

The second ingredient is a sharper, color-degree-sensitive potential.  Suppose the current minimum color degree is $d$.  We choose a vertex $v$ of color degree $d$ and an incident color $c$ with a large weighted first layer.  If the first layer is cheap, we delete it and make $v$ lose color $c$.  Otherwise, the weights of the exact color-$c$ layers cannot grow too quickly at every step: if they did, the first $\ell+1$ layers would exceed the total budget available from the $d$ colors at $v$.  The threshold $\rho_d$ in~\eqref{eq:rho-intro} is chosen precisely so that
\(
1+\rho_d+\cdots+\rho_d^\ell\ge d
\)
turns this observation into a contradiction.
Thus some layer $L_i$ has a relatively light three-layer neighborhood $L_{i-1}\cup L_i\cup L_{i+1}$.
Inside $L_i$, the layer-coloring theorem provides a color-$c$ independent set $I_i$ containing at least a $1/(2\ell-1)$ fraction of the layer weight.  We keep $I_i$ and delete the rest of the three-layer interface.  Every vertex of $I_i$ then loses color $c$.
The multiplier
\(
(2\ell-1)\bigl(1+\rho_d+\rho_d^{-1}\bigr)
\)
is exactly large enough to balance the loss:
\[
\underbrace{(2\ell-1)\bigl(1+\rho_d+\rho_d^{-1}\bigr)w(I_i)}_{\text{new weight of the retained core}}
\ge
\underbrace{w(L_{i-1})+w(L_i)+w(L_{i+1})}_{\text{weight removed from the interface}}.
\]
This makes the total weight nondecreasing under each deletion, while the graph becomes smaller.  Induction then forces the original total weight to be at most one.

The paper is organized as follows. Section~\ref{preliminaries} introduces notation, the exact-layer coloring theorem, and the parameters used in the potential. Section~\ref{key result} states a key weighted estimate; we first use it to derive Theorems~\ref{thm:main} and~\ref{thm:ramsey} in Subsection~\ref{thm3and4}, and then prove the estimate in Subsection~\ref{thm7}. Section~\ref{thm5} gives the proof of Theorem~\ref{thm:asymptotic-intro}, which derives an asymptotic bound from our upper bound. Section~\ref{comparison} compares our bound with two recent ones.

\section{Preliminaries}\label{preliminaries}

\subsection{Notation and elementary facts}

For a positive integer $n$, write $[n]=\{1,2,\ldots,n\}$.
Given a graph $G$, let $\chi(G)$ and $\dist_G(x,y)$ be its chromatic number and the distance between two vertices $x,y$ in the same component.

Let $G$ be an edge-colored graph.  For a vertex $v\in V(G)$, define
\[
\mathcal C_G(v)=\{c:\text{an edge of color $c$ is incident with $v$}\}
\]
and
\(
\dcol{G}{v}=|\mathcal C_G(v)|.
\)
We call $\dcol{G}{v}$ the \emph{color degree} of $v$.  When the ambient graph is clear, we simply write $d_{\mathrm{col}}(v)$.

For a color $c$, a vertex $v$, and an integer $i\ge0$, define the exact color-$c$ layer
\[
N_c^i(v)=\{x\in V(G):\dist_{G_c}(v,x)=i\}.
\]
We use the convention $N_c^0(v)=\{v\}$.  Vertices outside the color-$c$ component of $v$ belong to none of these layers.  For an uncolored graph $H$, we similarly write $N_H^i(v)$, or simply $N^i(v)$ when $H$ is clear.

If $G'$ is a subgraph of $G$ containing $v$, we say that \emph{$v$ loses color $c$} when passing from $G$ to $G'$ if
$c\in\mathcal C_G(v)\less\mathcal C_{G'}(v)$.
Deleting vertices can only make a surviving vertex lose colors; it can never create a new incident color.

We first record the basic interface property of BFS layers.

\begin{lemma}\label{lem:adjacent-layers}
Let $x\in N_c^i(v)$ and let $xy$ be an edge of color $c$.  Then
\(
y\in N_c^{i-1}(v)\cup N_c^i(v)\cup N_c^{i+1}(v),
\)
where $N_c^{-1}(v)=\varnothing$.
\end{lemma}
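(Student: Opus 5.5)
This follows directly from the triangle inequality for the graph distance $\dist_{G_c}$, applied to the color-$c$ edge $xy$. Since $x\in N_c^i(v)$, the vertex $x$ lies in the color-$c$ component of $v$ and $\dist_{G_c}(v,x)=i$. Because $xy$ is an edge of $G_c$, the vertex $y$ lies in the same component of $G_c$, so $\dist_{G_c}(v,y)$ is a well-defined nonnegative integer. Hence $y$ belongs to exactly one exact layer $N_c^j(v)$, namely for $j=\dist_{G_c}(v,y)$.

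The key step is the two-sided estimate $|j-i|\le 1$.
\begin{itemize}
\item For the upper bound, concatenate a shortest $v$--$x$ path in $G_c$ with the edge $xy$. This gives a $v$--$y$ walk in $G_c$ of length $i+1$, so $j\le i+1$.
\item For the lower bound, apply the same argument with the roles of $x$ and $y$ swapped: a shortest $v$--$y$ path followed by the edge $yx$ gives $i\le j+1$, so $j\ge i-1$.
\end{itemize}
Therefore $j\in\{i-1,i,i+1\}$, which is the claimed containment $y\in N_c^{i-1}(v)\cup N_c^i(v)\cup N_c^{i+1}(v)$.

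The only thing left to check is the boundary case $i=0$. Then $x=v$, and $y$ is a color-$c$ neighbour of $v$ distinct from $v$ (edges are not loops), so $j=1$. This is consistent with the convention $N_c^{-1}(v)=\varnothing$: the value $j=-1$ cannot occur because distances are nonnegative.

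I do not expect any real obstacle. The one point needing care is that $y$ lies in the same color-$c$ component as $v$, so that it belongs to some layer at all; this holds because $y$ is joined to $x$ by a color-$c$ edge.
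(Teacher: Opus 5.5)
Your proposal is correct and follows the paper's proof: you extend a shortest color-$c$ path to $x$ by the edge $xy$ to get $\dist_{G_c}(v,y)\le i+1$, and swap the roles of $x$ and $y$ to get $i\le \dist_{G_c}(v,y)+1$. Your extra remarks, that $y$ lies in the color-$c$ component of $v$ and the boundary case $i=0$, are correct and only make explicit what the paper leaves implicit.
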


\begin{proof}
A shortest color-$c$ path from $v$ to $x$, followed by the edge $xy$, gives
$\dist_{G_c}(v,y)\le i+1$.
Interchanging $x$ and $y$ gives
$i=\dist_{G_c}(v,x)\le\dist_{G_c}(v,y)+1$.
Hence $i-1\le\dist_{G_c}(v,y)\le i+1$, and the distance is an integer.
\end{proof}

The next theorem is the structural input concerning odd cycles.

\begin{theorem}[Erd\H{o}s, Faudree, Rousseau, and Schelp~\cite{EFRS78}]\label{thm:layer-color}
Let $H$ be a graph containing no cycle of length $2\ell+1$.  Then, for every vertex $v\in V(H)$ and every $1\le i\le\ell$,
\(
\chi\bigl(H[N_H^i(v)]\bigr)\le 2\ell-1.
\)
\end{theorem}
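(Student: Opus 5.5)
We will argue by contradiction using the BFS tree from $v$ together with a minimality argument over ancestors. Fix a BFS tree $T$ of $H$ rooted at $v$, so that every vertex of $N_H^i(v)$ has a unique ancestor in each layer $N_H^j(v)$, $0\le j\le i$. For a vertex $u\in N_H^j(v)$, let $D_u$ be the set of its $T$-descendants lying in $L:=N_H^i(v)$. Suppose $\chi(H[L])\ge 2\ell$. Then $D_v=L$ has chromatic number at least $2\ell$, while for any $u\in L$ we have $|D_u|=1$. Hence there is a largest $j<i$ and a vertex $u\in N_H^j(v)$ with $\chi(H[D_u])\ge 2\ell$, and every child $w$ of $u$ satisfies $\chi(H[D_w])\le 2\ell-1$. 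The sets $D_w$ over the children $w$ of $u$ partition $D_u$.

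The cycle will be built as follows. Take $a\in D_w$ and $b\in D_{w'}$ with $w\ne w'$, joined by a path $P$ inside $H[D_u]$ of length $p$. The two $T$-paths from $u$ to $a$ and from $u$ to $b$ meet only at $u$, because they pass through different children. They avoid $L$ except at their endpoints $a,b$, so they are internally disjoint from $P$. Together with $P$ they form a cycle of length $p+2(i-j)$. Since $0\le j<i\le \ell$ we have $1\le i-j\le \ell$, so the target value $p=2\ell+1-2(i-j)$ is odd and satisfies $1\le p\le 2\ell-1$. Thus it suffices to show the following: in $H[D_u]$ there is a path of this exact odd length $p$ whose endpoints lie in different parts of the partition $\{D_w\}$.

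To find the path, pass to a $2\ell$-critical subgraph $K\subseteq H[D_u]$. Then $K$ is connected and $\delta(K)\ge 2\ell-1$. Moreover $K$ meets at least two parts, since otherwise $\chi(K)\le 2\ell-1$. I would then use the standard longest-path argument. Take a longest path $x_0x_1\dots x_m$ in $K$. All neighbours of $x_0$ lie on the path, and there are at least $2\ell-1$ of them, say $x_{a_1},\dots,x_{a_{2\ell-1}}$ with $a_1<\dots<a_{2\ell-1}$. Rerouting through $x_0x_{a_t}$ gives, from a suitable start vertex, paths of many prescribed lengths. In particular this yields, for every vertex of large ``depth'', paths of every length $1,\dots,2\ell-1$ ending at a controlled set of vertices. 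The plan is a contrapositive. Assume every path of length exactly $p$ in $K$ has both endpoints in the same part. Then propagate this ``same part'' relation along $K$: first show that adjacent vertices, or vertices at distance two along suitable rerouted paths, are forced into the same part. Connectivity of $K$ then puts all of $K$ into a single part, which contradicts $\chi(K)\ge 2\ell$.

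I expect the main obstacle to be this last step: obtaining paths of the \emph{exact} odd length $p$ with endpoints in different parts, rather than merely paths of length at most $2\ell-1$. If the direct longest-path propagation is awkward, my fallback is the following. Choose an edge $xy$ of $K$ between two parts. Using $\delta(K)\ge 2\ell-1$, grow a path greedily from $y$ away from $x$, which is possible for $2\ell-2$ steps. Then show that either the vertex at distance $p-1$ along it gives a crossing path $x\,y\cdots$ of length $p$, or a same-part conflict lets us shift the endpoint by two, adjusting parity via a chord. Either way the odd length $p\le 2\ell-1$ is attained, and closing with the two tree paths gives a $C_{2\ell+1}$ in $H$, a contradiction.
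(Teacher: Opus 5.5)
Your reduction is correct. With $u$ the deepest tree vertex having $\chi(H[D_u])\ge 2\ell$, the two tree paths from $u$, together with a path $P\subseteq D_u$ of length $p=2\ell+1-2(i-j)$ whose ends lie in different sets $D_w$, do close a $C_{2\ell+1}$. The gap is that you never prove such a $P$ exists, and neither suggested route delivers it. Once you pass to a $2\ell$-critical $K$, criticality makes every proper induced subgraph $(2\ell-1)$-colorable. So the only information left about the partition is that $K$ meets at least two parts. What you need is therefore exactly this: the auxiliary graph on $V(K)$, joining two vertices whenever some path of length exactly $p$ connects them, is connected. Your ``propagation'' step (adjacent vertices are forced into the same part) just restates this connectivity. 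A longest-path rerouting analysis produces paths of many lengths, but gives no control over which parts their endpoints occupy. The fallback has the same problem. If the greedy path $x\,y\cdots$ of length $p$ ends in the part of $x$, nothing guarantees a chord that moves the endpoint while keeping length $p$, nor that the new endpoint escapes the part of $x$.

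The missing idea is an ordering argument, which is also what the paper's sketch uses. List $D_u$ so that each $D_w$ is a consecutive block, and inside each block list the vertices class by class according to a proper $(2\ell-1)$-coloring of $H[D_w]$. Let $f(x)$ be the number of edges of a longest increasing path in $H[D_u]$ starting at $x$. For an edge $xy$ with $x$ listed before $y$ we have $f(x)\ge f(y)+1$, so $f$ is a proper coloring, and $\chi(H[D_u])\ge 2\ell$ forces an increasing path $x_0x_1\cdots x_{2\ell-1}$. Inside a single block an increasing path strictly increases the class index, so it has at most $2\ell-2$ edges. Hence some edge $x_rx_{r+1}$ joins two different blocks. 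Any window $x_s\cdots x_{s+p}$ of $p$ consecutive edges containing this edge then has its endpoints in different blocks, because blocks are intervals and the path is increasing. This supplies your $P$.

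The paper's sketch instead runs the monotone-path argument on the whole layer, ordered by the BFS tree. Along an increasing path, the common ancestor of $x_s$ and $x_t$ is the shallowest of those of the consecutive pairs between them. So a window of the right length around the pair with the shallowest common ancestor closes a $C_{2\ell+1}$ directly. Coloring by longest increasing path then finishes the proof without your minimal-$u$ step.
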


The point for us is that a \emph{single exact layer} costs only $2\ell-1$ colors.  By contrast, coloring the union of the first $\ell$ layers by giving the even and odd layers separate palettes costs $2(2\ell-1)=4\ell-2$, which is the factor appearing in Theorems~\ref{thm:ACBJMR2026} and~\ref{thm:Miyazaki2026}.
The proof of Theorem~\ref{thm:layer-color} can be viewed as follows: an ordered BFS tree converts a sufficiently long monotone path inside one layer into a $C_{2\ell+1}$, and the maximum length of a monotone path starting at a vertex then gives a $(2\ell-1)$-coloring.

\subsection{The parameters and the weight multipliers}

For $d\ge2$, let $\rho_d$ be defined by~\eqref{eq:rho-intro}, and set
\begin{equation}\label{eq:B-def}
B_d=1+\rho_d+\rho_d^{-1}.
\end{equation}
Define
\begin{equation}\label{eq:A-def}
A_1=2\ell,
\quad
A_d=(2\ell-1)B_d=(2\ell-1)\bigl(1+\rho_d+\rho_d^{-1}\bigr)
\quad \text{for } d\ge2,
\end{equation}
and
\begin{equation}\label{eq:P-def}
P_0=1,
\quad
P_t=\prod_{j=1}^{t}A_j
\quad \text{for } t\ge1.
\end{equation}

We collect the elementary properties needed later.

\begin{lemma}\label{lem:rho-properties}
For every integer $d\ge2$, the number $\rho_d$ exists.  Moreover:
\begin{enumerate}[label=\textup{(\roman*)}]
\item if $2\le d\le\ell+1$, then $\rho_d=1$;
\item if $d>\ell+1$, then $\rho_d>1$ is the unique solution of
$1+\rho+\cdots+\rho^\ell=d$;
\item the sequence $(\rho_d)_{d\ge2}$ is nondecreasing.
\end{enumerate}
\end{lemma}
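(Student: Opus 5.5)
The plan is to study the polynomial $f(\rho)=1+\rho+\rho^2+\cdots+\rho^{\ell}$ on $[1,\infty)$ and read off all three properties from its monotonicity and continuity. Since every coefficient is positive and $\ell\ge1$, $f$ is continuous and strictly increasing on $[1,\infty)$, with $f(1)=\ell+1$ and $f(\rho)\ge\rho\to\infty$ as $\rho\to\infty$. Hence the set $S_d=\{\rho\in[1,\infty):f(\rho)\ge d\}$ is nonempty. Because $f$ is continuous and increasing, $S_d$ is a closed upward ray $[\rho^\ast,\infty)$, so its minimum exists. This gives existence of $\rho_d$.

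For (i): if $2\le d\le \ell+1$, then $f(1)=\ell+1\ge d$. So $1\in S_d$, and since $S_d\subseteq[1,\infty)$ we get $\rho_d=1$.

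For (ii): if $d>\ell+1$, then $f(1)<d$, so $1\notin S_d$ and $\rho_d>1$. By the intermediate value theorem, $f(\rho)=d$ has a solution in $(1,\infty)$, and strict monotonicity makes it unique; call it $\rho^\ast$. For $\rho<\rho^\ast$ we have $f(\rho)<d$, while $f(\rho^\ast)=d$, so $\min S_d=\rho^\ast$.

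For (iii): if $d\le d'$, then $f(\rho)\ge d'$ implies $f(\rho)\ge d$, so $S_{d'}\subseteq S_d$. Taking minima gives $\rho_d\le\rho_{d'}$.

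No step here is a real obstacle. The only point needing care is that the minimum in the definition is attained, which follows from continuity (closedness of $S_d$) together with unboundedness of $f$.
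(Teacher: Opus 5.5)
Your proposal is correct and takes essentially the same approach as the paper: both use that $f(\rho)=\sum_{j=0}^{\ell}\rho^j$ is continuous and strictly increasing on $[1,\infty)$ with $f(1)=\ell+1$ and $f(\rho)\to\infty$, and both get (iii) from the inclusion of the admissible sets. You additionally justify why the minimum is attained (the admissible set is a closed upward ray), which the paper leaves implicit.
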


\begin{proof}
For $\rho\ge1$, let $f(\rho)=\sum_{j=0}^{\ell}\rho^j$.
The function $f$ is continuous and strictly increasing, with
$f(1)=\ell+1$ and $f(\rho)\to\infty$ as $\rho\to\infty$.
This proves (i) and (ii).  If $d_1\le d_2$, then every $\rho$ satisfying
$f(\rho)\ge d_2$ also satisfies $f(\rho)\ge d_1$; hence the least admissible value for $d_1$ is no larger than the least admissible value for $d_2$, proving (iii).
\end{proof}

\begin{lemma}\label{lem:A-monotone}
The sequence $A_1,A_2,\ldots$ is nondecreasing, and every $A_d>1$.
\end{lemma}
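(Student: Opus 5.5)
We need to show $A_1\le A_2\le\cdots$ and $A_d>1$ for every $d$. The plan is to analyse the function $g(\rho)=1+\rho+\rho^{-1}$ on $[1,\infty)$ and combine it with the monotonicity of $\rho_d$ from Lemma~\ref{lem:rho-properties}(iii).

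First, for $\rho\ge1$ we have
\[
g'(\rho)=1-\rho^{-2}\ge0,
\]
so $g$ is nondecreasing on $[1,\infty)$. Since $(\rho_d)_{d\ge2}$ is nondecreasing and every $\rho_d\ge1$, the sequence
\[
B_d=g(\rho_d)=1+\rho_d+\rho_d^{-1}
\]
is nondecreasing in $d\ge2$. Multiplying by the positive constant $2\ell-1$ (here $\ell\ge1$) shows that $A_d=(2\ell-1)B_d$ is nondecreasing for $d\ge2$.

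It remains to compare $A_1$ with $A_2$. By Lemma~\ref{lem:rho-properties}(i), if $\ell\ge1$ then $2\le\ell+1$, so $\rho_2=1$. Hence $B_2=3$ and
\[
A_2=3(2\ell-1)=6\ell-3.
\]
Since $6\ell-3\ge2\ell$ exactly when $\ell\ge3/4$, this holds for every $\ell\ge1$, giving $A_1=2\ell\le A_2$.

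Finally, $A_1=2\ell\ge2>1$. Every later term is at least $A_1$ by the monotonicity just established, so $A_d>1$ for all $d$.

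None of these steps is a real obstacle. The only point needing care is the comparison $A_1\le A_2$, which rests on the boundary case $\rho_2=1$ from Lemma~\ref{lem:rho-properties}(i).
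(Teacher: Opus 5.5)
Your proposal is correct and follows essentially the same route as the paper: you show $g(\rho)=1+\rho+\rho^{-1}$ is nondecreasing on $[1,\infty)$ and use the monotonicity of $\rho_d$ for $d\ge2$. You then check $A_2=3(2\ell-1)\ge 2\ell=A_1$ via $\rho_2=1$, and conclude $A_d\ge A_1=2\ell\ge2>1$.
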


\begin{proof}
The function $g(\rho)=1+\rho+\rho^{-1}$ is nondecreasing on $[1,\infty)$ because
$g'(\rho)=1-\rho^{-2}\ge0$.
Lemma~\ref{lem:rho-properties} therefore implies that $A_d=(2\ell-1)g(\rho_d)$ is nondecreasing for $d\ge2$.
Also $\rho_2=1$, so
$A_2=3(2\ell-1)\ge2\ell=A_1$.
Finally, $A_1=2\ell\ge2$.
\end{proof}

\begin{lemma}\label{lem:weight-gain}
Let $0\le s<t$.  Then
\(
\frac{P_t}{P_s}=\prod_{j=s+1}^{t}A_j\ge A_t.
\)
Consequently, if a vertex $x$ has color degree $t\ge d$ in $G$ and loses at least one color when passing to an induced subgraph $G'$, then
\[
\frac{P_{\dcol{G}{x}}}{P_{\dcol{G'}{x}}}\ge A_d.
\]
\end{lemma}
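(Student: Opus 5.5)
The plan is to prove the product identity first and then obtain the consequence by monotonicity.

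For the identity, the definition \eqref{eq:P-def} gives $P_t=P_s\prod_{j=s+1}^{t}A_j$. Every $A_j>1$ by Lemma~\ref{lem:A-monotone}, so $P_s>0$ and we may divide, which yields
\[
\frac{P_t}{P_s}=\prod_{j=s+1}^{t}A_j .
\]
(When $s=0$ we use $P_0=1$.) Since $s<t$, the index set $\{s+1,\dots,t\}$ is nonempty and contains $j=t$. Every factor is at least $1$, again by Lemma~\ref{lem:A-monotone}. So dropping all factors except $A_t$ can only decrease the product, and we get $P_t/P_s\ge A_t$.

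For the consequence, set $t=\dcol{G}{x}$ and $s=\dcol{G'}{x}$. Passing to the induced subgraph $G'$ cannot create new incident colors, so $\mathcal C_{G'}(x)\subseteq\mathcal C_G(x)$. Since $x$ loses at least one color, this inclusion is strict, and therefore $0\le s<t$. The first part then gives $P_t/P_s\ge A_t$. Finally, $A$ is nondecreasing (Lemma~\ref{lem:A-monotone}) and $t\ge d$, so $A_t\ge A_d$.

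There is no real obstacle here. The only points needing care are that $s\ge 0$ (which allows the case $s=0$ when $x$ becomes isolated in $G'$) and that $A_d$ is well defined for every $d\ge1$.
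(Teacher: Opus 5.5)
Your proposal is correct and follows the paper's proof: \(P_t/P_s\) contains the factor \(A_t\) and all other factors exceed \(1\). The strict inclusion \(\mathcal C_{G'}(x)\subsetneq\mathcal C_G(x)\) gives \(s\le t-1\), and Lemma~\ref{lem:A-monotone} then gives \(A_t\ge A_d\).
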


\begin{proof}
The product $P_t/P_s$ contains the factor $A_t$, and every other factor is larger than $1$.  If $t=\dcol{G}{x}\ge d$ and $s=\dcol{G'}{x}\le t-1$, then the first assertion and Lemma~\ref{lem:A-monotone} give
$P_t/P_s\ge A_t\ge A_d$.
\end{proof}

\section{A key weighted statement}\label{key result}

For an edge-colored graph $G$ and a vertex $x\in V(G)$, define
\begin{equation}\label{eq:vertex-weight}
w_G(x)=\frac{1}{P_{\dcol{G}{x}}}.
\end{equation}
For $X\subseteq V(G)$, let
\begin{equation}\label{eq:set-weight}
w_G(X)=\sum_{x\in X}w_G(x),
\end{equation}
and write
\begin{equation}\label{eq:total-weight}
W(G)=w_G(V(G)).
\end{equation}

The following weighted statement is the key to the proof.

\begin{theorem}\label{thm:weight-at-most-one}
Let $G$ be a complete graph with a $k$-local edge-coloring in which every color graph is $C_{2\ell+1}$-free.  Then
\(
W(G)\le1.
\)
\end{theorem}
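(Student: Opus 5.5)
We argue by induction on $|V(G)|$, and in fact prove the statement for every induced subgraph of a complete graph with such a coloring. These are again complete graphs, $k$-local and $C_{2\ell+1}$-free in each color. If $|V(G)|=1$, the vertex has color degree $0$, so $W(G)=1/P_0=1$.

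For $|V(G)|\ge 2$, the goal is a proper induced subgraph $G'=G-D$ with $D\neq\varnothing$ and $W(G')\ge W(G)$. Then induction gives $W(G)\le W(G')\le 1$. Deleting vertices only lowers color degrees, so every surviving vertex has $w_{G'}(x)\ge w_G(x)$. It therefore suffices to find $D$ and a set $I$ of survivors that each lose at least one color, such that
\[
\sum_{x\in I}\bigl(w_{G'}(x)-w_G(x)\bigr)\ge w_G(D).
\]
By Lemma~\ref{lem:weight-gain}, if each $x\in I$ has $\dcol{G}{x}\ge d$, the left side is at least $(A_d-1)w_G(I)$. So it is enough to have $(A_d-1)w_G(I)\ge w_G(D)$, or more comfortably $A_d\,w_G(I)\ge w_G(I)+w_G(D)$.

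Let $d$ be the minimum color degree and $v$ a vertex attaining it.

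\emph{Case $d=1$.} Then $G$ is monochromatic of color $c$, and $H=G_c$ is complete. Since $|V(G)|\ge2$, we may take $I$ to be a single vertex and $D$ the rest. The condition becomes $2\ell\cdot w_G(I)\ge W(G)$. Here $W(G)=n/2\ell$ and the retained vertex has weight $1/2\ell$, so this reads $2\ell\ge n$. Completeness together with $C_{2\ell+1}$-freeness gives $n\le 2\ell$, so the condition holds.

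\emph{Case $d\ge 2$.} The colors at $v$ split $V(G)\setminus\{v\}$ into first layers $N_c^1(v)$, $c\in\mathcal C_G(v)$. Since $G$ is complete, these cover all other vertices, so $\sum_c w(N^1_c(v))=W(G)-w(v)$. Pick the color $c$ that maximizes $w(N^1_c(v))$, and write $L_i=N_c^i(v)$ and $x_i=w_G(L_i)$.

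First option: delete $L_1$, so that $v$ loses $c$. This works if $(A_d-1)w(v)\ge x_1$. Otherwise $x_1$ is large, say $x_1>\rho_d\,x_0$ with $x_0=w(v)$, after tuning the threshold.

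Second option: look for $1\le i\le\ell$ with
\[
x_{i-1}+x_i+x_{i+1}\le B_d\,x_i .
\]
Given such an $i$, Theorem~\ref{thm:layer-color} properly colors $G_c[L_i]$ with $2\ell-1$ colors, so some color class $I_i$ has $w(I_i)\ge x_i/(2\ell-1)$. Take $I=I_i$ and $D=(L_{i-1}\cup L_i\cup L_{i+1})\setminus I_i$. By Lemma~\ref{lem:adjacent-layers}, every color-$c$ neighbor of a vertex in $I_i$ lies in $D$, so each vertex of $I$ loses $c$. Then
\[
A_d\,w(I)\ge B_d\,x_i\ge x_{i-1}+x_i+x_{i+1}=w(I)+w(D).
\]
If no such $i$ exists, then for every $1\le i\le \ell$ we have $x_{i+1}>(B_d-1)x_i-x_{i-1}=(\rho_d+\rho_d^{-1})x_i-x_{i-1}$. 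An induction using $x_1\ge\rho_d x_0$ should yield $x_{i+1}-\rho_d x_i\ge \rho_d^{-1}(x_i-\rho_d x_{i-1})\ge0$, hence $x_i\ge\rho_d^i x_0$.

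\emph{Main obstacle.} The layers $L_0,\dots,L_{\ell+1}$ are disjoint, so $\sum_{i\le\ell}x_i\ge x_0(1+\rho_d+\cdots+\rho_d^\ell)\ge d\,x_0$. This has to be contradicted by a budget bound using the maximal choice of $c$ and the color degrees of the vertices. The plan is to compare against $d\,x_1\ge W(G)-w(v)$ rather than against $x_0$, and to normalize the growth chain starting at $x_1$, with $x_1$ large relative to $W(G)/d$. This forces the first $\ell+1$ layers to exceed $W(G)$, a contradiction. Making the thresholds of the first option and the chain consistent, including the boundary term at $i=\ell$ where $L_{\ell+1}$ may be included in $D$, is the delicate calibration. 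It is exactly where the definition of $\rho_d$ is used.
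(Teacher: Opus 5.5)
Your Case $d\ge2$ has the same structure as the paper's proof: delete $L_1$ when $x_1\le(A_d-1)x_0$, and otherwise keep a heaviest color-$c$ class $I_i$ of a light layer $L_i$ and delete the rest of $L_{i-1}\cup L_i\cup L_{i+1}$. Case $d=1$, however, contains a genuine error. Minimum color degree $1$ only says that $v$ sees a single color $c$. The other vertices may see many colors through edges inside $V(G)\less\{v\}$ (e.g.\ $va,vb$ of color $c$ and $ab$ of another color). So $G$ need not be monochromatic, and $W(G)=n/2\ell$ is false in general. Keeping one vertex $x$ only gives $W(G')=1$, so the required inequality $W(G')\ge W(G)$ is just the statement you are trying to prove.

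The missing idea is Theorem~\ref{thm:layer-color} with $i=1$. Since $N_c^1(v)=V(G)\less\{v\}$, the graph $G_c-v$ is $(2\ell-1)$-colorable, and giving $v$ a new color yields a proper $2\ell$-coloring of $G_c$. Keep a heaviest class $S$, so $w_G(S)\ge W(G)/2\ell$. Each vertex of $S$ is $v$ or a color-$c$ neighbor of $v$, so it loses $c$ in $G[S]$. Also $S\ne V(G)$, because $v$ is $c$-adjacent to every other vertex. Lemma~\ref{lem:weight-gain} then gives $W(G[S])\ge A_1w_G(S)\ge W(G)$.

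In Case $d\ge2$, the ``delicate calibration'' you defer is immediate and should be written out. Since $A_d-1-\rho_d=(2\ell-2)+(2\ell-2)\rho_d+(2\ell-1)\rho_d^{-1}>0$, failure of the first option gives $x_1>\rho_dx_0$. Your recursion $x_{i+1}-\rho_dx_i>\rho_d^{-1}(x_i-\rho_dx_{i-1})$ then yields $x_{i+1}>\rho_dx_i$ for $1\le i\le\ell$. Hence $x_j>\rho_d^{j-1}x_1$ for $1\le j\le\ell+1$, and
\[
\sum_{j=1}^{\ell+1}x_j>x_1\sum_{j=0}^{\ell}\rho_d^j\ge d\,x_1\ge W(G)-w_G(v),
\]
contradicting that $L_1,\dots,L_{\ell+1}$ are disjoint subsets of $V(G)\less\{v\}$.

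There is no boundary issue at $i=\ell$: $L_{\ell+1}$ is only deleted, and Theorem~\ref{thm:layer-color} is applied only to the retained layer $L_i$ with $i\le\ell$. One small repair is needed: in the second option, require $x_i>0$. Otherwise the inequality can hold with $L_{i-1}=L_i=L_{i+1}=\varnothing$, giving $D=\varnothing$ and no progress. With $x_i>0$, the nonempty layer $L_{i-1}$ is deleted. In the contradiction argument the chain itself produces $x_i>0$ at every step, so this restriction costs nothing.
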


\subsection{Proofs of Theorems~\ref{thm:main} and~\ref{thm:ramsey}}\label{thm3and4}
We first derive Theorems~\ref{thm:main} and~\ref{thm:ramsey} based on Theorem~\ref{thm:weight-at-most-one}, and then turn to the proof of Theorem~\ref{thm:weight-at-most-one}.

\begin{proof}[Proof of Theorem~\ref{thm:main}]
Since the coloring is $k$-local, $\dcol{G}{x}\le k$ for every vertex $x$.
The sequence $P_t$ is increasing, so
\(
w_G(x)=1/P_{\dcol{G}{x}}\ge1/P_k
\).
Summing over all $n$ vertices gives $W(G)\ge n/P_k$.
By Theorem~\ref{thm:weight-at-most-one}, $W(G)\le1$, and hence 
\[
n\le P_k=A_1\prod_{d=2}^{k}A_d
=2\ell(2\ell-1)^{k-1}
\prod_{d=2}^{k}\bigl(1+\rho_d+\rho_d^{-1}\bigr),
\]
which is the required bound.
\end{proof}

\begin{proof}[Proof of Theorem~\ref{thm:ramsey}]
Suppose that a $k$-edge-coloring of $K_n$ contains no monochromatic $C_{2\ell+1}$.
It is automatically $k$-local, so Theorem~\ref{thm:main} gives $n\le P_k$.
Therefore every $k$-edge-coloring of $K_{\lfloor P_k\rfloor+1}$ contains a monochromatic $C_{2\ell+1}$, and the stated inequality follows from the definition of the Ramsey number.
\end{proof}

\subsection{Proof of Theorem~\ref{thm:weight-at-most-one}} \label{thm7}

We use induction on $n=|V(G)|$.
If $n=1$, the unique vertex has color degree $0$, so its weight is $P_0^{-1}=1$.
Assume $n\ge2$ and that the theorem holds for every smaller complete graph satisfying the same hypotheses.

Choose a vertex $v$ of minimum color degree, and let
\(
d=\dcol{G}{v}.
\)
Since $G$ is complete and has at least two vertices, $d\ge1$.
We distinguish the cases $d=1$ and $d\ge2$.

\medskip
\noindent\textbf{Case 1: $d=1$.}
\medskip

Let $c$ be the unique color incident with $v$.
Every edge from $v$ has color $c$, so
$N_c^1(v)=V(G)\less\{v\}$.
The color graph $G_c$ is $C_{2\ell+1}$-free, and Theorem~\ref{thm:layer-color} with $i=1$ gives
\[
\chi\bigl(G_c[V(G)\less\{v\}]\bigr)\le2\ell-1.
\]
Color this set properly with at most $2\ell-1$ vertex colors, and assign $v$ one additional color.  We obtain a proper coloring of $G_c$ with at most
$2\ell=A_1$ vertex colors.

Let $S$ be a color class of maximum weight.  The classes partition $V(G)$, so
\begin{equation}\label{eq:d1-heavy-class}
w_G(S)\ge\frac{W(G)}{A_1}.
\end{equation}
The set $S$ is independent in $G_c$.  Since $n\ge2$ and $v$ is joined by a color-$c$ edge to every other vertex, $S\ne V(G)$.
Thus $G'=G[S]$ is a strictly smaller complete graph.
It remains $k$-local, and every one of its color graphs is an induced subgraph of the corresponding color graph of $G$, so it is still $C_{2\ell+1}$-free.

Every vertex of $S$ loses color $c$ in $G'$, because $S$ contains no color-$c$ edge.  Since every vertex of $G$ has color degree at least the minimum value $d=1$, Lemma~\ref{lem:weight-gain} gives
\(w_{G'}(x)\ge A_1w_G(x)\) for \(x\in S\).
Consequently, by~\eqref{eq:d1-heavy-class},
\[
W(G')=\sum_{x\in S}w_{G'}(x)
\ge A_1w_G(S)
\ge W(G).
\]
The induction hypothesis gives $W(G')\le1$, and therefore $W(G)\le1$.

\medskip
\noindent\textbf{Case 2: $d\ge2$.}
\medskip

The $d$ colors incident with $v$ partition $V(G)\less\{v\}$ according to the color of the edge from $v$.
Hence
\[
\sum_{c\in\mathcal C_G(v)}w_G(N_c^1(v))=W(G)-w_G(v).
\]
By averaging, there is a color $c\in\mathcal C_G(v)$ such that
\[
w_G(N_c^1(v))\ge\frac{W(G)-w_G(v)}{d}.
\]
For $i\ge0$, we write \(L_i=N_c^i(v)\) and \(\alpha_i=w_G(L_i)\).
Then $L_0=\{v\}$ and
\(
W(G)-\alpha_0\le d\alpha_1.
\)
Only vertices in the color-$c$ component of $v$ occur in the layers, so
\begin{equation}\label{eq:layer-sum-budget}
\sum_{i\ge1}\alpha_i
\le W(G)-\alpha_0
\le d\alpha_1.
\end{equation}
This is the global budget against which the local three-layer costs will be compared.

\begin{claim}\label{claim:layer-choice}
At least one of the following holds:
\begin{enumerate}[label=\textup{(\alph*)}]
\item $\alpha_1\le(A_d-1)\alpha_0$;
\item there is an $i\in[\ell]$ with $\alpha_i>0$ such that
\(
\alpha_{i-1}+\alpha_i+\alpha_{i+1}\le B_d\alpha_i.
\)
\end{enumerate}
\end{claim}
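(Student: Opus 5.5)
We argue by contradiction: assume that neither (a) nor (b) holds, and derive a violation of the budget inequality~\eqref{eq:layer-sum-budget}. Write $\rho=\rho_d$, so that $B_d=1+\rho+\rho^{-1}$ and $1+\rho+\cdots+\rho^{\ell}\ge d$. The plan is to show that the failure of (b) forces the layer weights to grow geometrically with ratio at least $\rho$ for $\ell$ consecutive steps. This would make the first $\ell+1$ layers weigh more than $d\alpha_1$.

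\emph{Starting point.} Since (a) fails, $\alpha_1>(A_d-1)\alpha_0$. Because $\ell\ge1$,
\[
A_d-1=(2\ell-1)(1+\rho+\rho^{-1})-1\ge\rho+\rho^{-1}>\rho .
\]
Since $\alpha_0=w_G(v)>0$, this gives $\alpha_1>\rho\alpha_0$, and in particular $\alpha_1>0$.

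\emph{Propagation.} Next we show by induction on $i$ that $\alpha_{i+1}-\rho\alpha_i>0$ for every $i\in[\ell]$. Suppose $\alpha_i>\rho\alpha_{i-1}$; the case $i=1$ is the starting point. Then $\alpha_i>0$, so the failure of (b) applies at index $i$ and gives
\[
\alpha_{i-1}+\alpha_{i+1}>(\rho+\rho^{-1})\alpha_i .
\]
Rearranging,
\[
\alpha_{i+1}-\rho\alpha_i>\rho^{-1}\alpha_i-\alpha_{i-1}=\rho^{-1}\bigl(\alpha_i-\rho\alpha_{i-1}\bigr)>0 .
\]
This is the crucial use of the specific form of $B_d$: the quadratic $x^2-(\rho+\rho^{-1})x+1$ has roots $\rho$ and $\rho^{-1}$, so the recurrence factors as above. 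It yields $\alpha_{j}>\rho\alpha_{j-1}$ for all $1\le j\le\ell+1$.

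\emph{Contradiction.} Iterating gives $\alpha_j\ge\rho^{j-1}\alpha_1$ for $1\le j\le\ell+1$, with strict inequality for $j\ge2$ (since $\alpha_1>0$). The layers are disjoint, so summing and using the defining property of $\rho_d$ gives
\[
\sum_{i\ge1}\alpha_i\ge\sum_{j=1}^{\ell+1}\alpha_j>(1+\rho+\cdots+\rho^{\ell})\alpha_1\ge d\alpha_1 .
\]
This contradicts~\eqref{eq:layer-sum-budget}. Hence (a) or (b) must hold.

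The only delicate point is ensuring $\alpha_i>0$ at each step, so that the negation of (b) may be invoked; this comes for free from the inductive inequality $\alpha_i>\rho\alpha_{i-1}\ge0$. The remaining steps are routine.
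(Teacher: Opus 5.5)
Your proof is correct and follows the paper's argument: you negate both alternatives, use $A_d-1>\rho_d$ to get $\alpha_1>\rho_d\alpha_0$, propagate $\alpha_{i+1}>\rho_d\alpha_i$ for $i\in[\ell]$ from the failure of (b), and sum the first $\ell+1$ layers against the budget~\eqref{eq:layer-sum-budget}. The only differences are cosmetic. You propagate the difference $\alpha_{i+1}-\rho_d\alpha_i$ through the factorization instead of dividing by $\alpha_i$ as the paper does, and you correctly keep $j=1$ as an equality, taking strictness of the sum from $j\ge 2$ (the paper writes that case as a strict inequality).
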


\begin{proof}
Suppose, for a contradiction, that
\begin{equation}\label{eq:root-expensive}
\alpha_1>(A_d-1)\alpha_0
\end{equation}
and that, for every $i\in[\ell]$ with $\alpha_i>0$,
\begin{equation}\label{eq:all-layers-expensive}
\alpha_{i-1}+\alpha_i+\alpha_{i+1}>B_d\alpha_i.
\end{equation}
First note that
$$A_d-1-\rho_d
=(2\ell-1)(1+\rho_d+\rho_d^{-1})-1-\rho_d=(2\ell-2)+(2\ell-2)\rho_d+(2\ell-1)\rho_d^{-1}>0.$$
Thus~\eqref{eq:root-expensive} implies
\begin{equation}\label{eq:first-ratio}
\frac{\alpha_1}{\alpha_0}>\rho_d,
\end{equation}
so in particular $\alpha_1>0$.

We claim inductively that, for every $1\le i\le\ell+1$,
\begin{equation}\label{eq:ratio-induction}
\alpha_i>0
\qquad\text{and}\qquad
\frac{\alpha_i}{\alpha_{i-1}}>\rho_d.
\end{equation}
The case $i=1$ is~\eqref{eq:first-ratio}.
Assume~\eqref{eq:ratio-induction} holds for some $1\le i\le\ell$.
It follows from \eqref{eq:all-layers-expensive} and $B_d=1+\rho_d+\rho_d^{-1}$ that
\(
\alpha_{i+1}>(\rho_d+\rho_d^{-1})\alpha_i-\alpha_{i-1}.
\)
After division by $\alpha_i>0$,
\[
\frac{\alpha_{i+1}}{\alpha_i}
>\rho_d+\rho_d^{-1}-\frac{\alpha_{i-1}}{\alpha_i}.
\]
The induction hypothesis gives
$\alpha_{i-1}/\alpha_i<\rho_d^{-1}$, and hence
$\alpha_{i+1}/\alpha_i>\rho_d$.
This proves~\eqref{eq:ratio-induction}.

It follows that
\(
\alpha_j>\rho_d^{j-1}\alpha_1\)
for \(1\le j\le\ell+1\).
Therefore, by the definition of $\rho_d$,
\[
\sum_{j=1}^{\ell+1}\alpha_j
>
\alpha_1\sum_{j=0}^{\ell}\rho_d^j
\ge d\alpha_1,
\]
contradicting the budget~\eqref{eq:layer-sum-budget}.
\end{proof}

We now perform the deletion supplied by Claim~\ref{claim:layer-choice}.

\medskip
\noindent\textbf{Subcase 2.1: $\alpha_1\le(A_d-1)\alpha_0$.}
\medskip

Delete the first layer $L_1$ and let
\(
G'=G[V(G)\less L_1].
\)
The set $L_1$ is precisely the set of color-$c$ neighbors of $v$, so $v$ loses color $c$.  Since $d\ge2$, the neighbors of $v$ in its other incident colors remain, and hence
$\dcol{G'}{v}=d-1$.
Thus
\(
w_{G'}(v)=A_dw_G(v).
\)
Every other surviving vertex can only lose colors, so its weight does not decrease.  Consequently,
\begin{align*}
W(G')
\ge W(G)-\alpha_1-\alpha_0+A_d\alpha_0
=W(G)-\alpha_1+(A_d-1)\alpha_0
\ge W(G).
\end{align*}
The layer $L_1$ is nonempty because $c$ is incident with $v$, so $G'$ is strictly smaller than $G$.  The induction hypothesis gives $W(G')\le1$, and hence $W(G)\le1$.

\medskip
\noindent\textbf{Subcase 2.2: there is an $i\in[\ell]$ with $\alpha_i>0$ and
$\alpha_{i-1}+\alpha_i+\alpha_{i+1}\le B_d\alpha_i$.}
\medskip

The color graph $G_c$ is $C_{2\ell+1}$-free, so Theorem~\ref{thm:layer-color} gives
\(
\chi(G_c[L_i])\le2\ell-1.
\)
Partition $L_i$ into at most $2\ell-1$ color-$c$ independent sets, adding empty classes if necessary, and choose a class $I_i$ of maximum weight.  Then
\begin{equation}\label{eq:I-heavy}
w_G(I_i)\ge\frac{\alpha_i}{2\ell-1}.
\end{equation}

Delete
\(
D_i=L_{i-1}\cup(L_i\less I_i)\cup L_{i+1}
\)
and let
\(
G'=G[V(G)\less D_i].
\)
By Lemma~\ref{lem:adjacent-layers}, every color-$c$ neighbor of a vertex $x\in I_i$ lies in
$L_{i-1}\cup L_i\cup L_{i+1}$.  $L_{i-1}$ and $L_{i+1}$ are deleted, while a color-$c$ neighbor in $L_i$ cannot belong to $I_i$ because $I_i$ is independent in $G_c$.  Thus every vertex of $I_i$ loses color $c$.

\begin{figure}[htbp]
\centering
\begin{tikzpicture}[
  layer/.style={draw, rounded corners, minimum width=3.2cm, minimum height=1.35cm, align=center},
  keep/.style={draw, rounded corners, minimum width=2.25cm, minimum height=0.44cm, align=center},
  note/.style={font=\small}
]
\node[layer] (left) {$L_{i-1}$\\[1mm]{\small delete}};
\node[layer, right=0.7cm of left] (middle) {};
\node[layer, right=0.7cm of middle] (right) {$L_{i+1}$\\[1mm]{\small delete}};
\node[note, anchor=north] at ($(middle.north)+(0,0.10)$) {$L_i$};
\node[keep] (core) at ($(middle.center)+(0,0.02)$) {keep $I_i$};
\node[note] at ($(middle.center)+(0,-0.48)$) {delete $L_i\setminus I_i$};
\draw ($(left.east)+(0,0)$) -- ($(middle.west)+(0,0)$);
\draw ($(middle.east)+(0,0)$) -- ($(right.west)+(0,0)$);
\node[note, below=0.35cm of middle] {color-$c$ edges meet only the same or an adjacent layer};
\end{tikzpicture}
\caption{The exact-layer deletion.  
}
\label{fig:layer-deletion}
\end{figure}

The vertex $v$ was chosen with minimum color degree, so every $x\in I_i$ has
$\dcol{G}{x}\ge d$.  Lemma~\ref{lem:weight-gain} therefore gives \(w_{G'}(x)\ge A_dw_G(x)\) for \(x\in I_i\).
Every other surviving vertex has nondecreasing weight.  Hence, using~\eqref{eq:I-heavy} and $A_d=(2\ell-1)B_d$,
\begin{align*}
W(G')
\ge W(G)-(\alpha_{i-1}+\alpha_i+\alpha_{i+1})+A_dw_G(I_i)
\ge W(G)-B_d\alpha_i+A_d\frac{\alpha_i}{2\ell-1}
=W(G).
\end{align*}
Because $\alpha_i>0$, the layer $L_i$ is nonempty.  Every vertex of $L_i$ has a predecessor in $L_{i-1}$ on a shortest color-$c$ path from $v$, so $L_{i-1}$ is nonempty and is deleted.  Thus $G'$ is strictly smaller than $G$.  The induction hypothesis gives $W(G')\le1$, and therefore $W(G)\le1$.

Both cases are complete.

\section{Proof of Theorem~\ref{thm:asymptotic-intro}} \label{thm5}
 Throughout this section $\ell\ge2$ is fixed and $d\to\infty$.
For $d>\ell+1$, Lemma~\ref{lem:rho-properties} shows that $\rho_d$ is the unique solution of
\begin{equation}\label{eq:rho-equation}
1+\rho_d+\rho_d^2+\cdots+\rho_d^\ell=d.
\end{equation}

\begin{lemma}\label{lem:rho-expansion}
Fix $\ell \ge 2$ and let $x=d^{1/\ell}$.  Then
\[
\rho_d
=x-\frac1\ell-\frac{\ell+1}{2\ell^2}x^{-1}+O_\ell(x^{-2}).
\]
\end{lemma}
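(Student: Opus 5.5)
We need an asymptotic expansion of the root of $1+\rho+\cdots+\rho^\ell=d$ with $x=d^{1/\ell}$. The plan is to pin down the coarse size of $\rho_d$ first, then match coefficients in the defining equation, and finally make the error term rigorous by a monotonicity squeeze.

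\textbf{Coarse localisation.} For $d>\ell+1$ we have $\rho_d>1$. The equation \eqref{eq:rho-equation} gives $\rho_d^\ell<d$, so $\rho_d<x$. It also gives $d\le(\ell+1)\rho_d^\ell$, so $\rho_d\ge(d/(\ell+1))^{1/\ell}$. Hence $\rho_d\asymp x$, and in particular $\rho_d\to\infty$.

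\textbf{Formal expansion.} Multiply \eqref{eq:rho-equation} by $\rho-1$ to get $\rho^{\ell+1}-1=d(\rho-1)$. Equivalently, use the direct form
\[
\rho^\ell\bigl(1+\rho^{-1}+\rho^{-2}+O(\rho^{-3})\bigr)=x^\ell .
\]
Taking $\ell$-th roots gives
\[
x=\rho\bigl(1+\rho^{-1}+\rho^{-2}+O(\rho^{-3})\bigr)^{1/\ell}.
\]
Write $u=\rho^{-1}$ and expand $(1+u+u^2)^{1/\ell}$ by the binomial series:
\[
1+\frac{u+u^2}{\ell}+\frac{1}{2}\cdot\frac1\ell\Bigl(\frac1\ell-1\Bigr)u^2+O(u^3)
=1+\frac u\ell+\Bigl(\frac1\ell+\frac{1-\ell}{2\ell^2}\Bigr)u^2+O(u^3).
\]
The $u^2$ coefficient simplifies to $\frac{\ell+1}{2\ell^2}$. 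This yields
\[
x=\rho+\frac1\ell+\frac{\ell+1}{2\ell^2}\rho^{-1}+O(\rho^{-2}).
\]
Inverting, $\rho=x-\frac1\ell-\frac{\ell+1}{2\ell^2}\rho^{-1}+O(\rho^{-2})$. Since $\rho=x+O(1)$, we have $\rho^{-1}=x^{-1}+O(x^{-2})$, which gives the stated formula.

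\textbf{Making the $O$-terms rigorous.} The main care needed is to ensure the error terms are legitimate given the a priori bound $\rho\asymp x$. I would instead do a direct squeeze. Set
\[
\rho_\pm=x-\frac1\ell-\frac{\ell+1}{2\ell^2}x^{-1}\pm Cx^{-2}.
\]
Put $f(\rho)=\sum_{j\le\ell}\rho^j$. Expanding $f(\rho_\pm)$ in powers of $x$ should give
\[
f(\rho_\pm)=x^\ell\pm \ell C x^{\ell-3}+O_\ell(x^{\ell-3}),
\]
where the $x^{\ell-1}$ and $x^{\ell-2}$ terms cancel by the choice of coefficients. The implied constant in the last $O_\ell$ is independent of $C$. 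So for $C$ large enough we get $f(\rho_-)<d<f(\rho_+)$ for large $d$. Since $f$ is strictly increasing (Lemma~\ref{lem:rho-properties}), this gives $\rho_-<\rho_d<\rho_+$. The cancellation check at orders $x^{\ell-1}$ and $x^{\ell-2}$ is the step most prone to arithmetic slips. It is exactly the coefficient computation of the formal expansion, so I would reuse that computation there.
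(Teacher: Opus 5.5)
Your proposal is correct and follows essentially the paper's route. The paper also shows $F(\widetilde\rho)=x^\ell+O_\ell(x^{\ell-3})$ for $\widetilde\rho=x-\frac1\ell-\frac{\ell+1}{2\ell^2}x^{-1}$ by checking that the $x^{\ell-1}$ and $x^{\ell-2}$ coefficients vanish, and then turns this into $|\rho_d-\widetilde\rho|=O_\ell(x^{-2})$ using $F'\asymp x^{\ell-1}$ and the mean value theorem, where you use the monotone squeeze with $\rho_\pm=\widetilde\rho\pm Cx^{-2}$ instead. One precision point: the $C$-dependent cross terms in $f(\rho_\pm)$ are of smaller order than $x^{\ell-3}$ but not uniform in $C$, so ``large $d$'' must depend on $C$. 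This is harmless because you fix $C$ first.
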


\begin{proof}
Set
\[
\widetilde\rho=x-\frac1\ell-\frac{\ell+1}{2\ell^2}x^{-1}
\qquad\text{and}\qquad
F(t)=\sum_{j=0}^{\ell}t^j.
\]
Then $F(\rho_d)=d$.
Write $c_0=-1/\ell$ and $c_1=-(\ell+1)/(2\ell^2)$, so
$\widetilde\rho=x+c_0+c_1x^{-1}$.
By the binomial theorem, we have
\begin{align*}
\widetilde\rho^\ell
&=x^\ell+\ell c_0x^{\ell-1}
 +\left(\ell c_1+\binom{\ell}{2}c_0^2\right)x^{\ell-2}
 +O_\ell(x^{\ell-3}),\\
\widetilde\rho^{\ell-1}
&=x^{\ell-1}+(\ell-1)c_0x^{\ell-2}+O_\ell(x^{\ell-3}),\\
\widetilde\rho^{\ell-2}
&=x^{\ell-2}+O_\ell(x^{\ell-3}).
\end{align*}
All lower powers together contribute $O_\ell(x^{\ell-3})$.  Hence
\begin{align*}
F(\widetilde\rho)
=x^\ell+(\ell c_0+1)x^{\ell-1} +
\left(\ell c_1+\binom{\ell}{2}c_0^2+(\ell-1)c_0+1\right)x^{\ell-2}
+O_\ell(x^{\ell-3}).
\end{align*}
The choices of $c_0$ and $c_1$ make the coefficients of $x^{\ell-1}$ and $x^{\ell-2}$ equal to zero, so
\[
F(\widetilde\rho)=x^\ell+O_\ell(x^{\ell-3})
=d+O_\ell(x^{\ell-3})=F(\rho_d)+O_\ell(x^{\ell-3}).
\]

From~\eqref{eq:rho-equation} and $\rho_d\ge1$,
\(
\rho_d^\ell\le d\le(\ell+1)\rho_d^\ell,
\)
so
\(
\frac{x}{(\ell+1)^{1/\ell}}\le\rho_d\le x.
\)
In particular, for all sufficiently large $x$, both $\rho_d$ and $\widetilde\rho$ lie in $[x/2,2x]$.
On this interval,
$F'(t)=\sum_{j=1}^{\ell}jt^{j-1}=\Theta_\ell(x^{\ell-1})$.
Applying the mean value theorem 
, we have
\[
|\rho_d-\widetilde\rho|
=
\frac{|F(\rho_d)-F(\widetilde\rho)|}{F'(\xi)}
=
\frac{O_\ell(x^{\ell-3})}{\Theta_\ell(x^{\ell-1})}
=O_\ell(x^{-2})
\]
for some $\xi$ between $\rho_d$ and $\widetilde\rho$.
\end{proof}

\begin{lemma}\label{lem:B-expansion}
For fixed $\ell\ge2$, $$B_d
=d^{1/\ell}+\frac{\ell-1}{\ell}+O_\ell(d^{-1/\ell})~\text{and}~ \log B_d
=\frac1\ell\log d+\frac{\ell-1}{\ell}d^{-1/\ell}
+O_\ell(d^{-2/\ell}).$$
\end{lemma}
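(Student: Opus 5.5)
The plan is to substitute the expansion of Lemma~\ref{lem:rho-expansion} into $B_d=1+\rho_d+\rho_d^{-1}$ and then take logarithms. Throughout, $x=d^{1/\ell}$, and we may assume $d>\ell+1$ (indeed $d$ large), so that Lemma~\ref{lem:rho-expansion} applies. Finitely many small $d$ are absorbed into the $O_\ell$ constants.

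\emph{Step 1 (expansion of $B_d$).} Lemma~\ref{lem:rho-expansion} gives $\rho_d=x-\frac1\ell+O_\ell(x^{-1})$. In the proof of that lemma we also saw $\rho_d\ge x/(\ell+1)^{1/\ell}$, so $\rho_d^{-1}=O_\ell(x^{-1})$. Adding,
\[
B_d=1+\rho_d+\rho_d^{-1}=x+1-\frac1\ell+O_\ell(x^{-1})=d^{1/\ell}+\frac{\ell-1}{\ell}+O_\ell(d^{-1/\ell}),
\]
which is the first assertion.

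\emph{Step 2 (the logarithm).} Factor out $x$ to write
\[
B_d=x\Bigl(1+u\Bigr),\qquad u=\frac{\ell-1}{\ell}x^{-1}+O_\ell(x^{-2}).
\]
Then $\log B_d=\frac1\ell\log d+\log(1+u)$. Since $u\to0$, we have $\log(1+u)=u+O(u^2)$. Moreover $u=\frac{\ell-1}{\ell}x^{-1}+O_\ell(x^{-2})$ and $u^2=O_\ell(x^{-2})$. Together these give
\[
\log B_d=\frac1\ell\log d+\frac{\ell-1}{\ell}d^{-1/\ell}+O_\ell(d^{-2/\ell}).
\]

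\emph{Main obstacle.} The only delicate point is bookkeeping the error terms so that each is of the stated order. In particular, the $x^{-1}$ term of $\rho_d$ and the term $\rho_d^{-1}$ both contribute only $O_\ell(x^{-1})$ to $B_d$, hence only $O_\ell(x^{-2})$ after dividing by $x$. This is exactly why the $x^{-1}$ coefficient $-\frac{\ell+1}{2\ell^2}$ from Lemma~\ref{lem:rho-expansion} is not needed here. For small $d$ (say $d\le d_0(\ell)$) the quantities $B_d$ and $\log B_d$ are bounded, so both identities hold trivially there after enlarging the implied constants. This makes the statement valid for all $d\ge2$.
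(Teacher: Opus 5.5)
Your proposal is correct and follows the same route as the paper. You substitute the expansion from Lemma~\ref{lem:rho-expansion} into $B_d=1+\rho_d+\rho_d^{-1}$, factor out $x=d^{1/\ell}$, and apply $\log(1+u)=u+O(u^2)$. The only differences are minor: you bound $\rho_d^{-1}=O_\ell(x^{-1})$ directly from $\rho_d\ge x/(\ell+1)^{1/\ell}$ rather than Taylor-expanding it as the paper does, and you also check the small-$d$ cases, which the paper leaves implicit.
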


\begin{proof}
Let $x=d^{1/\ell}$.  By Lemma~\ref{lem:rho-expansion},
$\rho_d=x-\frac{1}{\ell}+O_\ell(x^{-1})=x\left(1-\frac{1}{\ell x}+O_\ell(x^{-2})\right)$.
Applying the Taylor expansion $1/(1-u)=1+u+O(u^2)$, we have
$$\rho_d^{-1}=\frac{1}{x}\cdot \frac{1}{1-\frac{1}{\ell x}+O_\ell(x^{-2})}=
 \frac{1}{x}+\frac{1}{\ell x^2}+O_\ell(x^{-3}).$$
Therefore
$$B_d=1+\rho_d+\rho_d^{-1}=1+x-\frac1\ell+O_\ell(x^{-1})=x+\frac{\ell-1}{\ell}+O_\ell(x^{-1})=d^{1/\ell}+\frac{\ell-1}{\ell}+O_\ell(d^{-1/\ell}).$$
Note that
$B_d=x\left(1+\frac{\ell-1}{\ell}x^{-1}+O_\ell(x^{-2})\right).$
For $u=O(x^{-1})$, the Taylor expansion $\log(1+u)=u+O(u^2)$ yields
\begin{align*}
\log B_d=\log x+\frac{\ell-1}{\ell}x^{-1}+O_\ell(x^{-2})=\frac1\ell\log d+\frac{\ell-1}{\ell}d^{-1/\ell}+O_\ell(d^{-2/\ell}),
\end{align*}
as expected.
\end{proof}

\begin{lemma}\label{lem:power-sum}
For every fixed $0<\alpha<1$,
\(
\sum_{d=2}^{k}d^{-\alpha}
=\frac{k^{1-\alpha}}{1-\alpha}+O_\alpha(1).
\)
And
\(
\sum_{d=2}^{k}\frac1d=\log k+O(1).
\)
\end{lemma}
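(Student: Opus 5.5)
The plan is to compare each sum with the corresponding integral, using that $t\mapsto t^{-\alpha}$ and $t\mapsto 1/t$ are positive and decreasing on $[1,\infty)$.

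For a positive decreasing function $f$ on $[1,\infty)$ and every integer $d\ge2$, we have $\int_d^{d+1}f(t)\,dt\le f(d)\le\int_{d-1}^{d}f(t)\,dt$. Summing over $2\le d\le k$ gives the sandwich
\[
\int_2^{k+1}f(t)\,dt\le\sum_{d=2}^{k}f(d)\le\int_1^{k}f(t)\,dt .
\]

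For $f(t)=t^{-\alpha}$ with $0<\alpha<1$, the upper integral is $\frac{k^{1-\alpha}-1}{1-\alpha}$. The lower integral is $\frac{(k+1)^{1-\alpha}-2^{1-\alpha}}{1-\alpha}$. Since $0<1-\alpha<1$, the map $t\mapsto t^{1-\alpha}$ is subadditive on $[0,\infty)$, so $(k+1)^{1-\alpha}\le k^{1-\alpha}+1$. Also $(k+1)^{1-\alpha}\ge k^{1-\alpha}$ trivially. Hence both bounds equal $\frac{k^{1-\alpha}}{1-\alpha}$ up to an additive error of at most $\frac{2}{1-\alpha}$, which is $O_\alpha(1)$.

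For $f(t)=1/t$, the same sandwich gives
\[
\log(k+1)-\log 2\le\sum_{d=2}^{k}\frac1d\le\log k .
\]
Since $\log(k+1)-\log k\le 1/k\le1$, both sides are $\log k+O(1)$.

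No step is a real obstacle. The only care needed is that the error term depends on $\alpha$ only through the factor $\frac1{1-\alpha}$, and is uniform in $k$, as the statement requires.
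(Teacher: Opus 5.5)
Your proof is correct and is essentially the paper's argument: the paper also uses integral comparison for the decreasing function $x^{-\alpha}$ (with $\alpha=1$ giving the harmonic case), sandwiching $1+\sum_{d=2}^k d^{-\alpha}$ between $\int_1^k x^{-\alpha}\,dx$ and $1+\int_1^k x^{-\alpha}\,dx$, while your lower integral $\int_2^{k+1}$ works equally well. The subadditivity bound $(k+1)^{1-\alpha}\le k^{1-\alpha}+1$ (and likewise $\log(k+1)-\log k\le 1/k$) is not needed, since only the trivial inequality $(k+1)^{1-\alpha}\ge k^{1-\alpha}$ (respectively $\log(k+1)\ge\log k$) enters your lower bound.
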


\begin{proof}
Both estimates follow from the integral comparison for the decreasing function $x^{-\alpha}$; the second is the case $\alpha=1$.
Indeed, since $x^{-\alpha}$ decreases, we have
$$\int_1^k x^{-\alpha}\,dx\le 1+\sum_{d=2}^{k}d^{-\alpha}\le 1+\int_1^k x^{-\alpha}\,dx \quad \text{and}\quad \int_1^k x^{-\alpha}\,dx=
\begin{cases}
\frac{k^{1-\alpha}-1}{1-\alpha},&0<\alpha<1,\\
\log k,&\alpha=1.
\end{cases}
$$
It follows that the expected inequalities hold.
\end{proof}

\begin{proof}[Proof of Theorem~\ref{thm:asymptotic-intro}]

By \eqref{eq:B-def},
$B_d=1+\rho_d+\rho_d^{-1}$.
It suffices to prove \begin{equation}\label{eq:product-asymptotic-intro}
\prod_{d=2}^{k}\bigl(1+\rho_d+\rho_d^{-1}\bigr)=\prod_{d=2}^{k}B_d
=(k!)^{1/\ell}
\exp\!\left(k^{1-1/\ell}+O_\ell\!\left(k^{1-2/\ell}+\log k\right)\right).
\end{equation}
Taking logarithms and applying Lemma~\ref{lem:B-expansion},
\begin{align}\label{eq:sum-logB}
\log\prod_{d=2}^{k}B_d=\sum_{d=2}^{k}\log B_d
=\frac1\ell\sum_{d=2}^{k}\log d
+\frac{\ell-1}{\ell}\sum_{d=2}^{k}d^{-1/\ell}
+O\left(\sum_{d=2}^{k}d^{-2/\ell}\right).
\end{align}

The first sum is $\log(k!)$.  By Lemma~\ref{lem:power-sum},
\(
\frac{\ell-1}{\ell}\sum_{d=2}^{k}d^{-1/\ell}
=k^{1-1/\ell}+O_\ell(1).
\)
For the error term,
\[
\sum_{d=2}^{k}d^{-2/\ell}
=
\begin{cases}
O(\log k),&\ell=2,\\
O_\ell(k^{1-2/\ell}),&\ell>2.
\end{cases}
\]
Substituting these estimates into~\eqref{eq:sum-logB} gives
\[
\log\prod_{d=2}^{k}B_d
=\frac1\ell\log(k!)+k^{1-1/\ell}
+O_\ell\bigl(k^{1-2/\ell}+\log k\bigr).
\]
Exponentiation proves~\eqref{eq:product-asymptotic-intro}.  Finally,
\(
2\ell(2\ell-1)^{k-1}
=\frac{2\ell}{2\ell-1}(2\ell-1)^k,
\)
which gives the stated Ramsey bound
\[
R_k(C_{2\ell+1})
\le
\frac{2\ell}{2\ell-1}(2\ell-1)^k(k!)^{1/\ell}
\exp\!\left(k^{1-1/\ell}+O_\ell\!\left(k^{1-2/\ell}+\log k\right)\right)+1,
\]
as required.
\end{proof}

\section{Comparison with the recent bounds}\label{comparison}

For fixed $\ell\ge2$, ignore the final additive $1$ and write $U_{\rm new}$ for the main term in Theorem~\ref{thm:asymptotic-intro},
$U_{\rm M}=(4\ell-2)^k(k!)^{1/\ell}$ for the bound in Theorem~\ref{thm:Miyazaki2026}, and
$U_{\rm A}=(4\ell-2)^k k^{k/\ell}$ for the bound in Theorem~\ref{thm:ACBJMR2026}.
Since $4\ell-2=2(2\ell-1)$, Theorem~\ref{thm:asymptotic-intro} gives
\[
\log\frac{U_{\rm new}}{U_{\rm M}}
=-k\log2+k^{1-1/\ell}
+O_\ell\bigl(k^{1-2/\ell}+\log k\bigr).
\]
Because $k^{1-1/\ell}=o(k)$, this ratio is $2^{-k+o(k)}$.
Thus the exact-layer step removes, up to a subexponential correction, one factor $2$ for each color.

Using Stirling's formula,
\(
\log\frac{(k!)^{1/\ell}}{k^{k/\ell}}
=-\frac{k}{\ell}+O_\ell(\log k).
\)
Therefore
\[
\log\frac{U_{\rm new}}{U_{\rm A}}
=-\left(\log2+\frac1\ell\right)k
+k^{1-1/\ell}
+O_\ell\bigl(k^{1-2/\ell}+\log k\bigr).
\]
Equivalently, the improvement factor over Theorem~\ref{thm:ACBJMR2026} is
$(2\e^{1/\ell})^{k-o(k)}$.
The two gains have different sources: the factor $2^{k-o(k)}$ comes from using one exact layer rather than an entire ball, while the factor $\e^{k/\ell-o(k)}$ comes from adapting the weight to the current color degree rather than using the worst-case value $k$ at every step.

The leading term in the logarithm of our upper bound is still
$\frac1\ell k\log k$.  Within the present single-layer framework this is natural: the geometric constraint
$1+\rho_d+\cdots+\rho_d^\ell\ge d$ forces $\rho_d=\Theta(d^{1/\ell})$, and multiplying this scale over $d=1,\ldots,k$ produces $(k!)^{1/\ell}$.  Any improvement of the leading coefficient would therefore require additional structure beyond a single retained BFS layer.

\section*{Acknowledgement}
\noindent This research is supported by the National Key R\&D Program of China under grant number 2024YFA1013900 and the NSFC under grant number 12471327.

\section*{Declaration}
\noindent\textbf{Conflict of interest.}
The authors declare that they have no known competing financial interests or personal relationships that could have appeared to influence the work reported in this paper.

\section*{Data availability}
No data were used for the research described in this paper.

\end{document}